\documentclass[11pt,twoside,letterpaper]{article} 
\usepackage{times,fancyhdr}
\usepackage[dvips]{graphicx}

\def\figurename{Figure} 
\makeatletter
\renewcommand{\fnum@figure}[1]{\figurename~\thefigure.}
\makeatother

\def\tablename{Table} 
\makeatletter
\renewcommand{\fnum@table}[1]{\tablename~\thetable.}
\makeatother

\usepackage{mathrsfs}
\usepackage{amsmath}
\usepackage{amssymb}
\usepackage{amsfonts}
\usepackage{amsthm,amscd}
\usepackage{xcolor}
\usepackage{anysize,enumerate}

\newtheorem{assumption}{Assumption}[section]
\newtheorem{theorem}{Theorem}[section]
\newtheorem{lemma}[theorem]{Lemma}
\newtheorem{corollary}[theorem]{Corollary}

\theoremstyle{definition}
\newtheorem{definition}[theorem]{Definition}

\theoremstyle{remark}

\numberwithin{equation}{section}

\def\K{\mathcal K}
\def\h{\mathcal H}
\def\b{\mathcal B}
\def\r{\mathbb R}
\def\n{\mathbb N}

\def\lo{\Omega}

\def\s{\sigma}
\def\ls{\Sigma}
\def\d{\mathrm d}
\def\e{\epsilon}

\setlength{\topmargin}{-0.35in}
\setlength{\textheight}{8.8in}   
\setlength{\textwidth}{6.5in}    
\setlength{\oddsidemargin}{0 in} \setlength{\evensidemargin}{0 in}
\setlength{\headheight}{26pt} \setlength{\headsep}{8pt}

\begin{document}

\title{\bfseries  Uniform attractors of  non-autonomous Kirchhoff wave models
  \footnote{Supported by
Natural Science Foundation of China (No.11671367). e-mail: yzjzzut@tom.com, LYN20112110109@163.com, fnadream2011@163.com} }
\author{\bfseries  Zhijian Yang, \ \   Yanan Li, \ \ Na Feng \\
 School of Mathematics and Statistics, Zhengzhou
University, No.100, Science Road, \\
 Zhengzhou  450001, China}
\date{}
\maketitle \thispagestyle{empty} \setcounter{page}{1}

\begin{abstract} \noindent The paper investigates the existence and  upper semicontinuity of uniform attractors of  the perturbed non-autonomous Kirchhoff wave equations with strong damping and supercritical nonlinearity: $u_{tt}-\Delta u_{t}-(1+\epsilon\|\nabla u\|^{2})\Delta u+f(u)=g(x,t)$, where $\epsilon\in [0,1]$ is a perturbed parameter. It shows that when the nonlinearity $f(u)$ is of supercritical growth $p: \frac{N+2}{N-2}=p^*<p<p^{**}=\frac{N+4}{(N-4)^+}$: (i) the related evolution process has a  compact uniform attractor $\mathcal{A}_\ls^\e $ for each $\epsilon\in [0,1]$; (ii) the family of  uniform attractor $\mathcal{A}_\ls^\e $ is upper semicontinuous on the perturbed parameter  $\epsilon$ in the sense of partially strong topology.

\end{abstract}

\vspace{.08in} \noindent \textbf{Keywords}: Non-autonomous Kirchhoff  wave models; perturbed parameter;
 supercritical nonlinearity; uniform attractor; pullback attractor; upper semicontinuity.

\section{Introduction}
In this paper, we are concerned with  the existence and  upper semicontinuity of uniform attractors of  the perturbed non-autonomous Kirchhoff wave equations with strong damping and supercritical nonlinearity:
\begin{align}&
u_{tt}-\Delta u_{t}-(1+\epsilon\|\nabla u\|^{2})\Delta u+f(u)=g(x,t),\ \ x\in\Omega,\  t>\tau,\label{1.1}\\
&u|_{\partial\Omega}=0,\ \
u(x,\tau)=u^{\tau}_{0},\ \ u_{t}(x,\tau)=u^{\tau}_{1},\ \ \tau\in\mathbb{R},\label{1.2}
\end{align}
where $\lo$ is a bounded domain in $\r^N$ ($N\geq 3$) with the smooth boundary $\partial \lo, \epsilon\in [0,1]$ is a perturbed parameter.
Throughout this paper we use the following notations:
\begin{align*}&
  L^p=L^p(\lo),\ \ H^k=W^{k,2}(\lo),\ \ H_0^k=W_0^{k,2}(\lo),\ \  \|\cdot \|_p=\|\cdot\|_{L^p}, \ \
\|\cdot\|= \|\cdot\|_{L^2},
\end{align*}
 with $p\geq 1.$  The sign $ H_1\hookrightarrow H_2$ denotes that the space $H_1$ continuously  embeds   into $H_2$ and $H_1\hookrightarrow\hookrightarrow H_2$ denotes that  $H_1$ compactly embeds into $H_2$. We denote the phase spaces
\[ \h=(H^1_0\cap L^{p+1})\times L^2, \ \ \h_{-1}=H_0^1\times H^{-1},\]
 which are  equipped with  usual graph norms. For example,
 \[\|(u,v)\|_{\mathcal{H}}^2=\|u\|_{H^1}^2+\|u\|_{p+1}^2+\|v\|^2.\]

\begin{assumption}\label{11}
\begin{description}
  \item (i) $f\in C^1(\r)$ and
\begin{equation}\label{1.3}
          c_0|s|^{p-1}-c_1\leq f'(s)\leq c_2(1+|s|^{p-1}), \ \ \forall  s\in \r,
        \end{equation}
  with some $\frac{N+2}{N-2}=p^*<p<p^{**}=\frac{N+4}{(N-4)^+}$, where $c_i$ are positive constants and $a^+=\max\{a,0\}$;
 \item (ii) $(u^\tau_0, u^\tau_1)\in\h$ with $\|(u^\tau_0, u^\tau_1)\|_\h\leq R, g, \partial_t g\in L^2_b(\r; L^2)$, where
\begin{equation*}
        L^2_b(\r; L^2)=\{\phi\in L^2_{loc}(\r; L^2) | \|\phi\|^2_{L^2_b(\r; L^2)}=\sup_{t\in\r}\int_t^{t+1}\|\phi(s)\|^2ds< +\infty\}.
      \end{equation*}
      \end{description}
\end{assumption}

When $N = 1$,  Eq. \eqref{1.1}, without strong damping  $-\Delta u_{t}$, was introduced by Kirchhoff \cite{Kirchhoff} to describe the nonlinear  vibrations of an elastic stretched string.   In real process, dissipation plays an important spreading role for
the energy gather arising from the nonlinearity.  So the researches on the  Kirchhoff  wave equations with different  type of  dissipations have attracted considerable attention, the
well-posedness and asymptotic behavior of solutions to the Kirchhoff wave models  with dissipation $-\Delta u_{t}$  or $u_t$ or $h(u_t) $ (with $h(s)s\geq 0$) have been well investigated by many authors
(see \cite{Bae, Cavalcanti,  Matsuyama,  Nakao1, Nishihara, Ono, Ono1}  and references therein).

 Recently, Chueshov \cite{Chueshov} studied the well-posedness and longtime dynamics for the autonomous   Kirchhoff wave model  with strong nonlinear damping
\begin{eqnarray}\label{1.4}
u_{tt}-\sigma(\|\nabla u\|^{2}) \Delta u_{t}-\phi(\|\nabla u\|^{2})\Delta u+f(u)=h(x).
\end{eqnarray}
A major breakthrough is  that he finds a supercritical
exponent $p^{**} \equiv \frac{N+4}
{(N-4)^+}$  and showes that when the growth exponent $p$ of the nonlinearity $f(u)$ is up to the
supercritical range: $1 \leq  p< p^{**}$,  the IBVP of Eq. \eqref{1.4} is still  well-posed  and the related solution semigroup has a  partially strong  global attractor $\mathcal{A}_{ps}$, i.e., the compactness and attractiveness of $\mathcal{A}_{ps}$ are in the phase space $\h_{ps}=(H^1_0\cap L^{p+1,w})\times L^2$, which is  equipped with the  partially strong topology:
\begin{align} &
  (u^n,v^n)\rightarrow (u,v)\  \hbox{in} \  \h_{ps} \  \hbox{if and only if}\nonumber\\
  &  (u^n,v^n)\rightarrow (u,v)\  \hbox{in} \  H^1_0 \times L^2 \  \hbox{and}\  u^n \rightharpoonup u\  \hbox{in} \  L^{p+1},\label{1.5}
\end{align}
where the sign $``\rightharpoonup"$ denotes weak convergence. In particular, in the non-supercritical
case: $1\leq p\leq p^*\equiv\frac{N+2}{N-2}$,  the partially strong topology becomes the  strong one. By the way, here the growth exponent $p^*$ is said to be  critical relative to the natural energy space $\h=(H_0^1\cap L^{p+1})\times L^2$ for $H^1\hookrightarrow L^{p+1}$ as $p\leq p^*$, but the Sobolev embedding ceases to be effective as $p>p^*$.
  For the related researches  on this topic, one can see also \cite{DYL, Kalantarov,M-Z}.   Recently,    Ding, Yang and Li \cite{DYL}   removed the restriction of partially strong topology in \cite{Chueshov}.

 Uniform attractor and pullback
attractor   (see Def. \ref{22'} and Def. \ref{22} below) are two basic concepts to study the longtime dynamics of   non-autonomous evolution equations with various dissipations (cf. \cite{C-V,MM,Sun1,  BX}).
 Although there have been some  researches  on the global attractors of autonomous Kirchhoff wave equations with strong damping (cf. \cite{Chueshov,Kalantarov,M-Z, Nakao1, Nakao2, Y1, Y-W, Y-D}), there are only a few recent results on the longtime dynamics of more complicated non-autonomous ones (\cite{zhou,zhong}). We refer to \cite{zhou} for the investigations  on the existence of the kernel $\mathcal{K}$ and   the
Hausdorff dimension of the kernel sections $\mathcal{K}(s)$ for  strongly damped non-autonomous Kirchhoff wave models
 \begin{align}
u_{tt}-\alpha\Delta u_{t}-(\beta+\gamma\|\nabla u\|^\frac{\rho}{2})\Delta u+h(u_t)+f(u,t)=g(x,t)
\end{align}
in a bounded domain $\Omega\subset \mathbb{R}^N (N=1,2,3)$ with Dirichlet boundary condition, where $\alpha>0, \beta>0, \rho\geq -1, \gamma\geq 0$ and the source term $f(u,t)$ is of subcritical growth on $u$. 

 Recently, Wang and Zhong \cite{zhong}  studied  the existence and the upper semi-continuity of pullback attractors of problem \eqref{1.1}-\eqref{1.2}.
 Under  the critical nonlinearity assumptions:
\begin{align}\label{1.6}
&
f'(u)\geq -l,\ \ |f'(u)|\leq C(1+|u|^{p^*-1}),\nonumber\\
&\liminf_{|u|\rightarrow\infty}\frac{uf(u)-kF(u)}{u^{2}}\geq 0,\ \ \liminf_{|u|\rightarrow\infty}\frac{F(u)}{u^{2}}\geq 0,
\end{align}
where $F(u)=\int_0^u f(s)ds$,  they  established the existence of   pullback attractors and their  upper semicontinuity on the perturbed parameter $\epsilon$.

But there are  still some unsolved questions. For example,  for the perturbed non-autonomous   Kirchhoff wave
model \eqref{1.1},  if the nonlinearity $f(u)$ is of the supercritical growth $p: p^*\leq  p<p^{**}$, what about the existence and structure of its  uniform attractor and pullback  attractor?    What about the stability  of the attractors on the  perturbed parameter $\epsilon$?

The purpose of the present paper is to solve  these  questions. It proves that in  supercritical nonlinearity case  $p^*< p<p^{**}$:

(i) the related family of processes $\{U^\epsilon_{g}(t,\tau)\},  g\in \Sigma$  has in $\h$ a  compact uniform attractor $\mathcal{A}^\epsilon_\Sigma$  for each $\epsilon\in [0,1]$ and its structure is shown  (see Theorem \ref{43});

(ii) the family of compact uniform attractor $\mathcal{A}^\epsilon_\Sigma$   is upper semicontinuous  on the   perturbed parameter $\e$  in the sense of $\h_{ps}$ topology (i.e.,  partially strong topology) (see Corollary \ref{54}).

As a consequence, for any fixed $g\in \Sigma$ (the symbol space), the family of all kernel sections $\mathcal{A}^\epsilon_{g}=\{\mathcal{K}^\epsilon_{g}(t)\}_{t\in \mathbb{R}}$ is  the  pullback attractor of the  process $\{U^\epsilon_{g}(t,\tau)\}$ in $\h$ for each $\epsilon\in [0,1]$ (cf. \cite{C-V}), and it is also upper semicontinuous  on   $\e$  in the sense of $\h_{ps}$ topology (see Corollary \ref{54}).

In particular, for autonomous case, i.e., $g(x,t)\equiv g(x)$,  the related process $\{U^\epsilon_{g}(t,\tau)\}$ becomes the solution semigroup $S^\epsilon(t)$ acting on the phase space $\h$ for each $\epsilon\in [0,1]$, and the related pullback attractor becomes the global attractor $\mathcal{A}^\e$ of $S^\epsilon(t)$ in $\h$, which is   upper semicontinuous  on   $\e$  in the sense of $\h_{ps}$ topology.

 The main contributions of the paper are that under the assumptions that the external force  $g$ is translation bounded (rather than translation compact as usual),
 and the  the nonlinearity $f(u)$ is of
 supercritical growth $p:  p^*< p<p^{**}$,
 by combining  newly developed criterion of compensated compactness  \cite{Sun},
quasi-stabilizability estimates method \cite{Chueshov1}  and  J. Ball's  technique \cite{Ball},  we   prove  the   existence of the uniform attractor $A^\epsilon_\Sigma$ of problem \eqref{1.1}-\eqref{1.2} and   show their upper semicontinuity on the perturbed parameter $\epsilon$ in the sense of partially strong topology.   These results not only extend Chueshov's
work  on  autonomous  Kirchhoff
 models in \cite{Chueshov} to non-autonomous ones but also extend  Wang and Zhong's results  on pullback attractor \cite{zhong} to the supercritical nonlinearity case.

Recently, many authors devote to study the uniform attractor of non-autonomous
dissipative PDEs with non  translation compact external forces.  They
 introduce several new classes of external forces that
are not translation compact, but nevertheless allow the attraction in a strong
topology of the phase space and give  some   criteria   on this kind of   uniform attractor and applications of them (cf. \cite{Lu1,Lu2, Lu3,Ma2, Ma3,Moise,Sun,Zelik2,Zelik}).

 We show in the present paper  that the weak solutions of non-autonomous
Kirchhoff wave  model \eqref{1.1}-\eqref{1.2}  are   of higher partial regularity when $t>\tau$, which results in that not only
 the requirement for the external force $g: g, \partial_t g\in L^2_b(\mathbb{R};L^2)$  is natural but also  permits  non  translation compact external forces $g$.

The paper is organized as follows. In Section 2,  we introduce some  preliminaries. In Section 3, we give some results on the well-posedness.  In Section 4, we discuss the existence of uniform attractors. In Section 5, we investigate  the upper semicontinuity of the uniform attractors on the perturbed parameter  $\e$.

\section{Preliminaries}

\begin{definition} (i) The family of sets $\{U_\sigma(t,\tau)| t\geq \tau, \tau\in \r\}, \sigma\in \Sigma$ (parameter set) is said to be  a family of processes acting on   Banach  space $E$   if for each $\sigma\in \Sigma$,  $\{U_\sigma(t,\tau)| t\geq \tau, \tau\in \r\}$
is a process acting on $E$, i.e., the two-parameter   mappings  from $E$ to $E$
satisfying
\begin{align*} &
U_\sigma(t,s)U_\sigma(s,\tau)=U_\sigma(t,\tau), \ \ \forall  t\geq s\geq \tau, \tau\in \r, \\
&
U_\sigma(\tau,\tau) = I \ \ \hbox{(identity operator)},\ \ \tau\in \r.
\end{align*}
And the set  $\Sigma$ is  said to be   the symbol space and $\sigma\in \Sigma$ to be a symbol.

(ii) Let $\{T(t)\}_{t\geq 0}$ be a translation semigroup acting on $\ls$. The family of processes $\{U_\s(t,\tau)\},\s\in\ls$ is said to be satisfy  the translation identity if
\begin{equation}\label{2.1}
  U_\s(t+s,\tau+s)=U_{T(s)\s}(t,\tau), \ \ \forall \s\in\ls, t\geq \tau,\tau\in\r, s\geq 0.
\end{equation}

(iii) A bounded subset $B_0\subset E$ is said to be a bounded uniformly ($w. r. t.\  \s\in\ls$) absorbing set of the family of processes $\{U_\s(t,\tau)\},\s\in\ls$ if for any $\tau\in\r$ and bounded subset $B\subset X$ there exists a $T_0=T_0(B, \tau)\geq \tau$ such that
\[\bigcup_{\s\in\ls}U_\s(t,\tau)B\subset B_0,\ \ \forall t\geq T_0.\]
\end{definition}

\begin{definition}\label{22'}
A family of nonempty compact subsets $\{\mathcal{A}(t)\}_{t\in\mathbb{R}}$ of $E$ is said to be  a pullback attractor of the process $U(t,\tau)$ if
it is  invariant, i.e.,
             $  U(t,s)\mathcal{A}(s) =\mathcal{A}(t),  \ t\geq s $, and
  it pullback attracts all the bounded subsets of $E$, i.e., for every bounded subset $D\subset E$ and $t\in\mathbb{R}$,
          \begin{equation*}
            \lim_{s\rightarrow +\infty}\mathrm{dist}_E\{U(t,t-s)D, \mathcal{A}(t)\}=0.
           \end{equation*} 
 Here, $\mathrm{dist}_E\{\cdot,\cdot\}$ is the Hausdorff semidistance in $E$, i.e.,
 \begin{equation*}
 \mathrm{dist}_E\{A, B\}=\sup_{x\in A}\inf_{y\in B}\|x-y\|_E,  \ \  A, B \subset E.
 \end{equation*}
\end{definition}

\begin{definition}\label{22}
A closed set $\mathcal{A}_\Sigma\subset E$ is said to be the uniform ($w.r.t.\ \sigma\in\Sigma$) attractor of the family of processes $\{U_\sigma(t,\tau)\},\sigma\in\Sigma$  if
\begin{description}
  \item (i) (Attractiveness)  $\mathcal{A}_\Sigma$  uniformly ($w.r.t.\ \sigma\in\Sigma$) attracts all the bounded  subsets in $E$, i.e., for every bounded subset $B\subset X$ and $\tau \in\r$,
         \begin{equation*}
           \lim_{t\rightarrow\infty}\sup_{\sigma\in\Sigma}\mathrm{dist}_E\{U_\sigma(t,\tau)B,\mathcal{A}_\Sigma\}=0;
         \end{equation*}
         \item (ii) (Minimality)  for  any closed set $\mathcal{A}'\subset E$, if $\mathcal{A}'$ is of property (i), then $\mathcal{A}_\Sigma\subset \mathcal{A}'$.
\end{description}
\end{definition}

\begin{definition}\label{23}
(i) For any fixed $\sigma\in \Sigma$,  the set of all  bounded
full trajectories of the process $U_\sigma(t,\tau)$:
\begin{equation*}
  \K_\sigma= \{u(\cdot)| U_\sigma(t,\tau) u(\tau)=u(t),  \|u(t)\|_E\leq C_u, \forall  t\geq \tau, \tau\in \r \}
\end{equation*}
is said to be the kernel of the process  $U_\sigma(t,\tau)$. The set $\K_\sigma(s)=\{u(s)| u(\cdot)\in \K_\sigma\}$ is said to be the kernel section at time $t=s, s\in \r$.

(ii) The family of processes $\{U_\sigma(t,\tau)\},\sigma\in\Sigma$ is said to be uniformly ($w.r.t.\ \sigma\in\Sigma$) asymptotically compact on $E$, if  for any   $\tau\in\r$, bounded sequences
$\{\xi_n\}\subset E, \{\s_n\}\subset \ls$ and   sequence  $\{t_n\} \subset \r$ with $t_n\geq \tau $ and $t_n\rightarrow +\infty$, the sequence $\{U_{\s_n}(t_n,\tau)\xi_n\}$ is precompact in $E$ (cf. \cite{Moise}).

(iii)  The family of processes $\{U_\sigma(t,\tau)\},\sigma\in\Sigma$ is said to be norm-to-weak continuous,  if for any fixed $t$ and $\tau\in\r$ with $t\geq \tau$, for any sequence $\{(x_n,\s_n)\}\subset E\times \ls, (x_n,\s_n) \rightarrow (x,\s) $ in $E\times \ls$  imply that $U_{\s_n}(t,\tau)x_n\rightharpoonup U_\s(t,\tau)x$ in $E$.
\end{definition}

\begin{lemma}\label{26}\cite{Sun} Assume that $\ls$ is a compact metric space, the translation semigroup $\{T(t)\}_{t\geq 0}$ is continuous in $\ls$, the family of processes $\{U_\s(t,\tau)\}, \s\in\ls$ satisfies the translation identity \eqref{2.1} and
\begin{description}
   \item(i) it  is norm-to-weak continuous;
  \item (ii) it  has a bounded uniformly ($w. r. t.\  \s\in\ls$) absorbing set $B_0$ in $E$;
  \item (iii)  it is uniformly ($w.r.t.\ \s\in\ls$) asymptotically compact in $E$.
\end{description}
Then  it  has a compact uniform ($w.r.t.\ \s\in\ls$) attractor $\mathcal{A}_\ls$, and
\begin{equation}\label{2.2}
  \mathcal{A}_\ls=\omega_{0,\ls}(B_0)=\bigcup_{\s\in\ls}\K_\s(s), \ \ \forall s\in\r,
\end{equation}
where $\K_\s$ is the kernel of the process $U_\s(t,\tau), \omega_{0,\ls}(B_0)$ is the uniform $\omega$-limit set of $B_0$ at $t=0$, i.e.,
\begin{equation}\label{2.3}
 \omega_{0,\ls}(B_0)=\bigcap_{t\geq 0}\Big[\bigcup_{\s\in\ls}\bigcup_{s\geq t}U_{\s}(s,0)B_0\Big]_E,
\end{equation}
and the sign $[ \, \cdot\,  ]_E$ denotes the closure in $E$.
\end{lemma}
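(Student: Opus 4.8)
The plan is to take $\mathcal{A}_\Sigma := \omega_{0,\Sigma}(B_0)$ as defined in \eqref{2.3} and verify in turn that it is nonempty and compact, that it uniformly attracts every bounded set, that it is minimal among closed uniformly attracting sets, and finally that it admits the kernel representation \eqref{2.2}. I would first record the sequential description of the uniform $\omega$-limit set: $y\in\omega_{0,\Sigma}(B_0)$ if and only if there exist $\sigma_n\in\Sigma$, $t_n\to+\infty$ and $\xi_n\in B_0$ with $U_{\sigma_n}(t_n,0)\xi_n\to y$ in $E$. Nonemptiness is immediate from $B_0\neq\emptyset$ together with hypothesis (iii); compactness follows because $\omega_{0,\Sigma}(B_0)$ is an intersection of closed sets, hence closed, while every sequence drawn from it, being of the form $U_{\sigma_n}(t_n,0)\xi_n$ with $t_n\to+\infty$, is precompact by the uniform asymptotic compactness (iii).

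For attraction I would first reduce to $\tau=0$: the translation identity \eqref{2.1} gives $U_\sigma(t,\tau)=U_{T(\tau)\sigma}(t-\tau,0)$ with $T(\tau)\sigma\in\Sigma$, so $\sup_{\sigma\in\Sigma}\mathrm{dist}_E\{U_\sigma(t,\tau)B,\mathcal{A}_\Sigma\}\le\sup_{\sigma'\in\Sigma}\mathrm{dist}_E\{U_{\sigma'}(t-\tau,0)B,\mathcal{A}_\Sigma\}$. Arguing by contradiction, a failure of attraction produces $\epsilon_0>0$, $t_n\to+\infty$, $\sigma_n\in\Sigma$ and $x_n\in B$ with $\mathrm{dist}_E\{U_{\sigma_n}(t_n,0)x_n,\mathcal{A}_\Sigma\}\ge\epsilon_0$. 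Using the uniform absorbing property (ii) I would factor $U_{\sigma_n}(t_n,0)=U_{T(T_0)\sigma_n}(t_n-T_0,0)\,U_{\sigma_n}(T_0,0)$ via \eqref{2.1}, so that $\xi_n:=U_{\sigma_n}(T_0,0)x_n\in B_0$ and $U_{\sigma_n}(t_n,0)x_n=U_{T(T_0)\sigma_n}(t_n-T_0,0)\xi_n$ is an approximating sequence based in $B_0$ with time $t_n-T_0\to+\infty$; its precompact limit (by (iii)) lies in $\omega_{0,\Sigma}(B_0)=\mathcal{A}_\Sigma$, contradicting the separation. Minimality is the easy direction: if $\mathcal{A}'$ is closed and uniformly attracting, then every $y\in\mathcal{A}_\Sigma$ is a limit $U_{\sigma_n}(t_n,0)\xi_n$ with $\xi_n\in B_0$, whence $\mathrm{dist}_E\{U_{\sigma_n}(t_n,0)\xi_n,\mathcal{A}'\}\to0$ forces $y\in\overline{\mathcal{A}'}=\mathcal{A}'$, giving $\mathcal{A}_\Sigma\subset\mathcal{A}'$.

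The substantial part is the kernel representation. For $\bigcup_\sigma\mathcal{K}_\sigma(s)\subset\mathcal{A}_\Sigma$, I would take a bounded complete trajectory $u(\cdot)\in\mathcal{K}_\sigma$ and write, for $n\ge1$, $u(s)=U_\sigma(s,s-n)u(s-n)=U_{T(s-n)\sigma}(n,0)u(s-n)$ by \eqref{2.1}, where $T(s-n)\sigma\in\Sigma$ by translation invariance of the symbol space and $\{u(s-n)\}$ is bounded; the absorbing reduction above then places the limit in $\omega_{0,\Sigma}(B_0)$, so $u(s)\in\mathcal{A}_\Sigma$. The reverse inclusion is where hypothesis (i) and the compactness of $\Sigma$ are genuinely used, via J. Ball's method: given $y\in\mathcal{A}_\Sigma$ choose $\sigma_n\in\Sigma$, $t_n\to+\infty$, $\xi_n\in B_0$ with $U_{\sigma_n}(t_n,0)\xi_n\to y$, set $y_n(t):=U_{\sigma_n}(t_n+t,0)\xi_n$ and $\eta_n:=T(t_n)\sigma_n\in\Sigma$. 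Compactness of $\Sigma$ yields, along a subsequence, $\eta_n\to\sigma\in\Sigma$; the uniform asymptotic compactness (iii) together with a diagonal extraction over a countable dense set of times produces a bounded $u(\cdot)$ with $y_n(t)\to u(t)$ in $E$ for every $t$ and $u(0)=y$; passing to the limit in $y_n(t)=U_{T(\tau)\eta_n}(t-\tau,0)y_n(\tau)$ using the norm-to-weak continuity (i) and the continuity of $T(\cdot)$ on $\Sigma$ identifies $u(t)=U_\sigma(t,\tau)u(\tau)$, i.e. $u\in\mathcal{K}_\sigma$ with $u(0)=y$. Translation invariance of $\Sigma$ makes $\bigcup_\sigma\mathcal{K}_\sigma(s)$ independent of $s$, yielding \eqref{2.2} for every $s\in\mathbb{R}$.

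I expect the main obstacle to be precisely this construction of a complete bounded trajectory through each point of $\mathcal{A}_\Sigma$. The difficulty is that (i) supplies only norm-to-weak continuity, so passing to the limit in the process relation yields at first merely weak convergence; one must upgrade it to strong convergence by invoking the uniform asymptotic compactness (iii) at each time, and then reconcile the strong limit $u(t)$ with the weak limit $U_\sigma(t,\tau)u(\tau)$ through uniqueness of limits. Managing the symbol simultaneously — extracting a single $\sigma\in\Sigma$ consistent across all times from the shifted symbols $T(t_n)\sigma_n$, which needs the compactness of $\Sigma$ and the continuity of the translation semigroup, and handling backward shifts through the translation invariance of $\Sigma$ — is the other delicate point of the argument.
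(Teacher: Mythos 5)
The paper does not prove this lemma; it is quoted verbatim from the cited reference \cite{Sun}, so there is no in-paper proof to compare against. Your argument is essentially the standard proof from that reference (and from Chepyzhov--Vishik): defining $\mathcal{A}_\Sigma=\omega_{0,\Sigma}(B_0)$, deriving compactness and uniform attraction from hypotheses (ii)--(iii) via the translation identity, and obtaining the kernel representation by Ball's construction of a complete bounded trajectory, with norm-to-weak continuity reconciled against the strong limits supplied by asymptotic compactness --- it is correct, modulo only the routine diagonal refinements you yourself flag and the (standard, and satisfied here) translation invariance of $\Sigma$ needed for backward shifts.
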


\begin{definition}\label{27}  Let $\ls$ be a symbol space and  $B$ be a bounded subset in  Banach space $E$. A function $\phi(\cdot,\cdot;\cdot,\cdot)$  defined on $(B\times B)\times (\ls\times \ls)$ is said to be a  contractive function if for any sequences $\{x_n\} \subset B$ and $\{\s_n\} \subset \ls$, there exist    subsequences $\{x_{n_k}\} \subset\{x_n\} $ and $\{\s_{n_k}\} \subset\{\s_n\} $ such that
\begin{equation*}
  \lim_{k\rightarrow\infty}\lim_{l\rightarrow\infty}\phi(x_{n_k},x_{n_l};\s_{n_k},\s_{n_l})=0.
\end{equation*}
\end{definition}

\begin{lemma}\label{28}\cite{Sun}  Assume that  the family of processes $\{U_\sigma(t,\tau)\},\sigma\in\Sigma$ satisfies   translation identity \eqref{2.1}, and the following conditions holds:
\begin{description}
   \item(i)
 it has a  bounded uniformly ($w. r. t.\  \s\in\ls$) absorbing set $B_0\subset E$;

  \item(ii)  for any $\delta>0$ there exist $T=T(B_0, \delta)>0$ and a contractive function $\phi_T$ defined on $(B_0\times B_0)\times (\ls\times\ls)$ such that
\begin{equation*}
  \|U_{\s_1}(T,0)x-U_{\s_2}(T,0)y\|_E\leq \delta+\phi_T(x,y;\s_1,\s_2), \ \ \forall x,y \in B_0, \ \s_1,\s_2 \in\ls.
\end{equation*}
\end{description}
Then the family of processes $\{U_\sigma(t,\tau)\},\sigma\in\Sigma$ is uniformly ($w.r.t.\ \sigma\in\Sigma$) asymptotically compact on $E$.
\end{lemma}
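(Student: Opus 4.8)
The plan is to verify Definition~\ref{23}(ii) directly. Fix $\tau\in\r$, bounded sequences $\{\xi_n\}\subset E$ and $\{\s_n\}\subset\ls$, and times $t_n\geq\tau$ with $t_n\to+\infty$; writing $y_n:=U_{\s_n}(t_n,\tau)\xi_n$, I would show that $\{y_n\}$ has a convergent subsequence. Since the same argument applies verbatim to any subsequence of $\{y_n\}$ (which is again of this form), this yields precompactness of $\{y_n\}$, and hence uniform asymptotic compactness. As $E$ is complete, it suffices to extract a Cauchy subsequence.

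The first step is a reduction to the short window $[0,T]$ with data inside $B_0$. For a prescribed $T>0$ I use the process identity to split $y_n=U_{\s_n}(t_n,t_n-T)z_n$ with $z_n:=U_{\s_n}(t_n-T,\tau)\xi_n$; since $\{\xi_n\}$ is bounded and $B_0$ is uniformly absorbing, $z_n\in B_0$ for all $n$ large (namely once $t_n-T\geq T_0(\{\xi_n\},\tau)$). Applying the translation identity \eqref{2.1} with $s=t_n-T\geq 0$ rewrites $U_{\s_n}(t_n,t_n-T)=U_{\tilde\s_n}(T,0)$, where $\tilde\s_n:=T(t_n-T)\s_n\in\ls$ and $T(\cdot)$ denotes the translation semigroup. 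Thus $y_n=U_{\tilde\s_n}(T,0)z_n$ with $z_n\in B_0$ and $\tilde\s_n\in\ls$, which is precisely the configuration covered by hypothesis (ii).

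Next I would feed in the contractive estimate. Given $\delta>0$, pick $T=T(B_0,\delta)$ and the associated contractive $\phi_T$ from (ii), carry out the reduction above, and obtain
\[\|y_n-y_m\|\leq\delta+\phi_T(z_n,z_m;\tilde\s_n,\tilde\s_m),\qquad n,m\ \text{large}.\]
Because $\{z_n\}\subset B_0$ and $\{\tilde\s_n\}\subset\ls$, Definition~\ref{27} supplies subsequences along which $\lim_k\lim_l\phi_T=0$, so that $\limsup_k\limsup_l\|y_{n_k}-y_{n_l}\|\leq\delta$; that is, this subsequence is ``$\delta$-almost Cauchy.'' I then diagonalize over $\delta_j=1/j\downarrow 0$: working with the window $T_j$ attached to $\delta_j$, I build nested index sets $I_1\supseteq I_2\supseteq\cdots$, on each of which the double-$\limsup$ is $\leq\delta_j$, choose an increasing diagonal $p_j\in I_j$ with $\limsup_{l\in I_j}\|y_{p_j}-y_l\|\leq 2\delta_j$, and use nestedness to conclude that $\{y_{p_j}\}$ is Cauchy.

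The hard part will be the bookkeeping of the diagonal extraction rather than any single estimate. The contractive hypothesis only delivers a double-$\limsup$ bound for each fixed $\delta$, and the reduction window $T$---together with the auxiliary data $z_n,\tilde\s_n$---changes with $\delta$; one must therefore keep the successive subsequences nested so that the final diagonal sequence simultaneously inherits the $2\delta_j$-control from every level. A secondary but necessary check is the time bookkeeping: ensuring $t_n-T_j\geq T_0$ and $t_n-T_j\geq 0$ hold along the tail of each subsequence, so that both the absorbing property and the translation identity \eqref{2.1} are legitimately applied. Note that, unlike Lemma~\ref{26}, no continuity hypothesis on the processes is needed here.
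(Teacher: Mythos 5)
The paper does not prove this lemma: it is quoted verbatim from the reference [Sun] (Sun--Cao--Duan, SIAM J. Appl. Dyn. Syst. 6 (2007)), so there is no in-paper proof to compare against. Your argument is correct and is essentially the standard proof from that source: reduce via the absorbing set and the translation identity \eqref{2.1} to the configuration $U_{\tilde\s_n}(T,0)z_n$ with $z_n\in B_0$, apply the contractive estimate to get a $\delta$-almost-Cauchy subsequence for each $\delta_j=1/j$, and diagonalize over nested index sets. You correctly identify the only delicate points --- that the window $T_j$ and hence the data $z_n,\tilde\s_n$ depend on $j$, so the subsequences must be kept nested, and that the absorbing time and the nonnegativity of $s=t_n-T_j$ must hold along each tail --- and your handling of them is sound.
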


\begin{lemma}\label{29}\cite{Lu1}  Let the family of processes $\{U_\sigma(t,\tau)\},\sigma\in\Sigma$ satisfy the translation identity \eqref{2.1} and the symbol space $\ls$ be translation invariant, i.e., $T(h)\ls=\ls$ for all $h\geq 0$. Then for every $\tau\in\r$ and $\s\in\ls$, there exists at least one $\s'\in\ls$ satisfying
\begin{equation*}
  U_\s(t,\tau)=U_{\s'}(t-\tau+\tau_0, \tau_0), \ \ \forall t\geq \tau, \tau_0\in\r.
\end{equation*}
\end{lemma}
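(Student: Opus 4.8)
The plan is to reduce everything to the translation identity \eqref{2.1}, reading it as the statement that shifting the two time-arguments of a process forward by $s\ge 0$ is equivalent to replacing the symbol $\s$ by its forward translate $T(s)\s$. The key observation is that the two sides of the asserted identity describe the evolution over the \emph{same} elapsed time $t-\tau$; only the base (initial) time differs, being $\tau$ on the left and $\tau_0$ on the right. Thus the whole problem is to realize the base-time shift of size $\tau_0-\tau$ by an admissible change of symbol, and the sign of $\tau_0-\tau$ dictates which tool is needed.

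First I would treat the case $\tau_0\le\tau$. Here I set $s:=\tau-\tau_0\ge 0$ and apply \eqref{2.1} with symbol $\s$ and base times $t-s,\ \tau-s$, which gives
\[
U_\s(t,\tau)=U_{T(s)\s}(t-s,\tau-s)=U_{\s'}(t-\tau+\tau_0,\tau_0),\qquad \s':=T(\tau-\tau_0)\s .
\]
Since translation invariance gives $T(h)\ls=\ls$, we have $\s'\in\ls$, and because $t-s\ge\tau-s$ holds exactly when $t\ge\tau$, the identity is valid for all $t\ge\tau$. No inversion of the translation semigroup is required in this case.

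The remaining case $\tau_0>\tau$ is where the hypothesis of translation invariance becomes essential. Now the required base-time shift is \emph{forward}, so I must produce a symbol $\s'$ whose forward translate recovers $\s$. Setting $s:=\tau_0-\tau>0$ and using $T(s)\ls=\ls$, I choose $\s'\in\ls$ with $T(s)\s'=\s$; such a preimage exists precisely because $T(s)$ maps $\ls$ \emph{onto} $\ls$. Then \eqref{2.1} yields
\[
U_{\s'}(t-\tau+\tau_0,\tau_0)=U_{\s'}(t+s,\tau+s)=U_{T(s)\s'}(t,\tau)=U_\s(t,\tau),
\]
again for every $t\ge\tau$.

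The two cases together furnish, for each $\tau\in\r$, $\s\in\ls$ and $\tau_0\in\r$, a symbol $\s'\in\ls$ (depending also on $\tau_0$) with $U_\s(t,\tau)=U_{\s'}(t-\tau+\tau_0,\tau_0)$ for all $t\ge\tau$. I expect the only real obstacle to be the case $\tau_0>\tau$: since the translation family is only a semigroup (defined for $s\ge 0$), the identity \eqref{2.1} by itself can shift base times \emph{backward} but not forward, and it is exactly the surjectivity encoded in $T(h)\ls=\ls$ that supplies the needed preimage symbol. Without translation invariance this preimage need not lie in $\ls$, and the conclusion would fail.
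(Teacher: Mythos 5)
Your argument is correct. The paper gives no proof of Lemma \ref{29} at all---it is quoted from \cite{Lu1}---so there is nothing to compare against; your two-case reduction (a backward base-time shift handled directly by the translation identity \eqref{2.1} with $s=\tau-\tau_0\ge 0$, a forward shift handled by choosing a preimage symbol $\s'$ with $T(\tau_0-\tau)\s'=\s$, which exists by the surjectivity in $T(h)\ls=\ls$) is the standard argument and is complete, and you correctly isolate translation invariance as being needed only in the second case. One small point worth recording: as literally typeset the lemma places $\forall\,\tau_0\in\r$ inside the scope of the existential, which would demand a single $\s'$ working for every $\tau_0$; your reading, with $\s'$ depending on $\tau_0$, is the intended one and matches how the lemma is actually invoked in the paper (always with a fixed $\tau_0$, e.g.\ $\tau_0=0$ in the proofs of Theorem \ref{43} and Lemma \ref{53}).
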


\begin{lemma} \label{210}\cite{Simon}   Let $X, B$ and $Y$
be  Banach spaces, $X \hookrightarrow\hookrightarrow B\hookrightarrow
Y,$
\begin{eqnarray}&&
W =\{u\in L^p(0,T;X)|  u_t\in L^1(0,T;Y)\}, \ \hbox{with}\
1\leq p<\infty,\nonumber\\
&& W_1=\{u\in L^\infty(0,T;X)| u_t\in L^r(0,T;Y)\},\ \hbox{with}\
r>1.\nonumber
\end{eqnarray}
 Then,
 \[ W \hookrightarrow\hookrightarrow
L^p(0,T;B),\ \ W_1\hookrightarrow\hookrightarrow C([0,T];B).\]
\end{lemma}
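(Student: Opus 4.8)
The plan is to prove this as a vector-valued Aubin--Lions--Simon compactness result, resting on two structural facts: the compactness of the embedding $X\hookrightarrow\hookrightarrow B$ and the control of the time derivative $u_t$ in the weaker space $Y$. The first fact I would encode once and for all in an Ehrling-type interpolation inequality: for every $\eta>0$ there is a constant $C_\eta>0$ such that
\begin{equation*}
\|v\|_B\leq \eta\|v\|_X+C_\eta\|v\|_Y,\qquad \forall v\in X.
\end{equation*}
This follows by a standard contradiction argument: if it failed for some $\eta_0>0$, one could pick $v_n$ with $\|v_n\|_B=1$ but $\eta_0\|v_n\|_X+n\|v_n\|_Y<1$, so that $\{v_n\}$ is bounded in $X$ and $\|v_n\|_Y\to0$; using $X\hookrightarrow\hookrightarrow B$ to extract a $B$-convergent subsequence $v_n\to v$, the continuous injection $B\hookrightarrow Y$ forces $v=0$ in $Y$, hence $v=0$, contradicting $\|v\|_B=1$.

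For the first assertion $W\hookrightarrow\hookrightarrow L^p(0,T;B)$, I would invoke the Riesz--Fréchet--Kolmogorov criterion in the Banach-space-valued setting: a set bounded in $L^p(0,T;X)$, hence in $L^p(0,T;B)$, is relatively compact in $L^p(0,T;B)$ provided its time translates are uniformly small, i.e. $\|\tau_h u-u\|_{L^p(0,T-h;B)}\to0$ as $h\to0$ uniformly on bounded subsets of $W$, where $\tau_h u(t)=u(t+h)$. Applying the interpolation inequality pointwise in time and integrating gives
\begin{equation*}
\|\tau_h u-u\|_{L^p(0,T-h;B)}\leq \eta\|\tau_h u-u\|_{L^p(0,T-h;X)}+C_\eta\|\tau_h u-u\|_{L^p(0,T-h;Y)}.
\end{equation*}
The $X$-term is bounded by $2\eta\|u\|_{L^p(0,T;X)}$, uniformly small after fixing $\eta$; for the $Y$-term I would use $u(t+h)-u(t)=\int_t^{t+h}u_t(s)\,\d s$ together with Young's convolution inequality to obtain $\|\tau_h u-u\|_{L^p(0,T-h;Y)}\leq h^{1/p}\|u_t\|_{L^1(0,T;Y)}$, which tends to $0$ uniformly. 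Letting first $h\to0$ and then $\eta\to0$ closes the argument.

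For the second assertion $W_1\hookrightarrow\hookrightarrow C([0,T];B)$ I would argue by Ascoli--Arzelà. Since $u_t\in L^r(0,T;Y)$ with $r>1$, Hölder's inequality gives
\begin{equation*}
\|u(t)-u(s)\|_Y\leq|t-s|^{1-\frac1r}\|u_t\|_{L^r(0,T;Y)},
\end{equation*}
so $W_1$ is equi-Hölder-continuous as a family of maps $[0,T]\to Y$. Inserting this into the interpolation inequality shows $W_1$ is uniformly equicontinuous into $B$, while boundedness in $X$ together with $X\hookrightarrow\hookrightarrow B$ yields pointwise relative compactness in $B$. The Banach-valued Ascoli--Arzelà theorem then gives relative compactness in $C([0,T];B)$.

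The main obstacle is the vector-valued Riesz--Fréchet--Kolmogorov criterion underlying the first part: extending the scalar characterization of compactness in $L^p$ to $B$-valued functions, and verifying that mere $L^1$-in-time control of $u_t$ still produces uniformly small $L^p$-in-time translates. The gain of the factor $h^{1/p}$ from Young's inequality is precisely what reconciles the weak ($L^1$) assumption on $u_t$ with the stronger ($L^p$) conclusion, and once this criterion is in hand the interpolation inequality and the translation estimates are routine.
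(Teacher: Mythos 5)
The paper does not prove this lemma at all: it is quoted directly from Simon's compactness paper \cite{Simon}, so there is no in-paper argument to compare against. Your sketch is, in substance, the standard proof of the Aubin--Lions--Simon lemma and matches Simon's own route: Ehrling's interpolation inequality $\|v\|_B\le\eta\|v\|_X+C_\eta\|v\|_Y$ (your contradiction argument for it is correct), a vector-valued Kolmogorov--Riesz translation criterion for the $L^p(0,T;B)$ statement, and Ascoli--Arzel\`a for the $C([0,T];B)$ statement. The estimate $\|\tau_h u-u\|_{L^p(0,T-h;Y)}\le h^{1/p}\|u_t\|_{L^1(0,T;Y)}$ via Young's convolution inequality is exactly the right device for reconciling the $L^1$-in-time hypothesis on $u_t$ with the $L^p$-in-time conclusion.

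Two points need more care than you give them. First, in the Banach-valued setting, boundedness in $L^p(0,T;B)$ together with uniformly small translates does \emph{not} characterize relative compactness in $L^p(0,T;B)$ (an orthonormal sequence of constant-in-time functions in a Hilbert space $B$ is bounded, has identically vanishing translates, and has no convergent subsequence). Simon's criterion additionally requires the sets of time-averages $\{\int_{t_1}^{t_2}u(t)\,dt : u\in F\}$ to be relatively compact in $B$ for all $0<t_1<t_2<T$; in your situation this is supplied by the bound $\|\int_{t_1}^{t_2}u\,dt\|_X\le (t_2-t_1)^{1-1/p}\|u\|_{L^p(0,T;X)}$ together with $X\hookrightarrow\hookrightarrow B$, but it must be stated and checked rather than folded into ``bounded in $L^p(0,T;X)$, hence in $L^p(0,T;B)$.'' Second, for $W_1$ the hypothesis $u\in L^\infty(0,T;X)$ controls $\|u(t)\|_X$ only for a.e.\ $t$, whereas Ascoli--Arzel\`a needs $\{u(t):u\in F\}$ to lie in a fixed compact subset of $B$ for \emph{every} $t$; this is recovered by observing that the continuous representative sends each $t$ into the $B$-closure of the image of a ball of $X$, which is compact. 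With these two points made explicit, your argument is complete and is essentially the proof in \cite{Simon}.
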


\section{Well-posedness}
In this section, we  discuss the well-posedness of  problem \eqref{1.1}-\eqref{1.2}. We first define a  symbol space  generated by a fixed external force term $g_0$, with  $g_0, \partial_t g_0\in L^2_b(\r; L^2)$.

 Define the translation operator
\begin{equation*}
 T(h):L^2_{loc}(\r;L^2)\rightarrow L^2_{loc}(\r;L^2),\ \
  T(h)g(s)=g(s+h),\ \   s,  h\in \r.
\end{equation*}
Obviously, $\{T(h)\}_{ h\in \r}$ constitutes  a translation  group on $L^2_{loc}(\r;L^2)$. Let
\begin{equation}\label{3.1}
 \ls_0=\{T(h)g_0| h\in\r\}, \ \  \ls=\mathcal{H}(g_0)=\big[\ls_0\big]_{L^{2,w}_{loc}(\r; L^2)},
\end{equation}
and   $\ls$  be  equipped with   $L^{2,w}_{loc}(\r; L^2)$ topology, i.e.,
\[u^n\rightarrow u\ \ \hbox{in}\ \ \ls\ \ \hbox{if and only if}\ \  u^n\rightharpoonup u\ \ \hbox{in}\ \  L^{2}(t_1, t_2; L^2), \ \  \forall\  [t_1, t_2]\subset \r.\]
Then   $\ls$ is a compact metric space,
\begin{equation}\label{3.2}
\sup_{g\in\ls}\|g\|_{L^2_b(\r;L^2)}\leq \|g_0\|_{L^2_b(\r;L^2)}, \ \ \sup_{g\in\ls}\|\partial_tg\|_{L^2_b(\r;L^2)}\leq \|\partial_tg_0\|_{L^2_b(\r;L^2)},
\end{equation}
and $\{T(t)\}_{t\in\r}$ is continuous and invariant in $\ls$, i.e., $T(h)\ls=\ls,\ \forall h\in\r$ (cf. \cite{C-V}).

Repeating  the same arguments as  in \cite{Chueshov} (where the well-posedness of   problem \eqref{1.1}-\eqref{1.2} has been established  for the autonomous case:  $g(x,t)\equiv g(x)$ ) except for the treatment of $g(x,t)$  one easily  gets the following theorem.

\begin{theorem}\label{31} Let Assumption  \ref{11} be valid, with $g\in\ls$. Then   problem \eqref{1.1}-\eqref{1.2} admits a unique weak solution $u^\epsilon$, with $(u^\epsilon,u^\epsilon_t)\in C([\tau, T]; \h)$ for each $\epsilon\in [0,1]$, and
\begin{equation}\label{3.3}
  \|(u^\epsilon,u^\epsilon_t)(t)\|^2_\h+\|u^\epsilon_{tt}\|^2_{H^{-2}}+\int_\tau^T\|\nabla u_t^\epsilon(s)\|^2ds\leq K, \ \  t\in [\tau, T],
\end{equation}
where $K=C(\tau, T, R, \|g\|_{L^2_b(\r; L^2)})$ is a positive constant. Moreover, the solution is of the following properties:
\begin{description}
  \item(i) (Partial regularity when $t>\tau$)
\begin{equation}\label{5.19}
  \|\nabla u^\epsilon_t(t)\|^2+\|u^\epsilon_{tt}(t)\|^2_{H^{-1}}\leq K_1\Big(1+\frac{1}{(t-\tau)^2}\Big), \ \ t\in(\tau, T],
\end{equation}
where  $K_1=C(T-\tau, R,\|g_0\|_{L^2_b(\r; L^2)}, \|\partial_tg_0\|_{L^2_b(\r; L^2)})$;
  \item(ii) (Energy identity)
  \begin{equation}\label{3.4}
    E(u^\epsilon(t),u^\epsilon_t(t))+\int_s^t\big[\|\nabla u^\epsilon_t(r)\|^2-(g, u^\epsilon_t)\big]d r=E(u^\epsilon(s),u^\epsilon_t(s)), \ \ \forall t>s\geq \tau,
  \end{equation}
  where
  \begin{equation*}
  E(u^\epsilon,u^\epsilon_t)=\frac{1}{2}\Big[\|u^\epsilon_t\|^2+\|\nabla u^\epsilon\|^2+\frac{\e}{2}\|\nabla u^\epsilon\|^4 +2(F(u^\epsilon),1)\Big]\ \ \hbox{with} \ \ F(s)=\int_0^s f(r) d r;
  \end{equation*}
  \item (iii) (Stability and quasi-stability in $\h_{-1}$) the following Lipschitz stability
  \begin{equation}\label{3.5}
  \|(z,z_t)(t)\|^2_{\h_{-1}}\leq K\Big[\|(z,z_t)(\tau)\|^2_{\h_{-1}}+\|g_1-g_2\|^2_{L^2(\tau,t;H^{-1})} \Big],\ \ t\in[\tau, T],
  \end{equation}
  and quasi-stability
   \begin{equation}\label{3.6}
   \begin{split}
    \|(z,z_t)(t)\|^2_{\h_{-1}}  & \leq e^{-\kappa(t-\tau)}\|(z,z_t)(\tau)\|^2_{\h_{-1}} \\
       & +K \int_\tau^t\Big[\|(z,z_t)(s)\|^2_{L^2\times H^{-2}}+\|(g_1-g_2)(s)\|^2_{H^{-1}}\Big]ds, \ \ t\in[\tau,T],
   \end{split}
  \end{equation}
 hold for $z=u^{\epsilon, 1}-u^{\epsilon, 2}$, where $u^{\epsilon, 1}, u^{\epsilon, 2}$ are two weak solutions of problem \eqref{1.1}-\eqref{1.2}
 corresponding to initial data $(u^{\epsilon, i}(\tau),u^{\epsilon, i}_t(\tau))\in \h$,  with $\|(u^{\epsilon, i}(\tau),u^{\epsilon,  i}_t(\tau))\|_\h\leq R$, and $g_i\in L^2_b(\r; L^2)$, respectively.
 \end{description}
\end{theorem}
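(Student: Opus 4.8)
The plan is to mirror the Galerkin construction used for the autonomous problem in \cite{Chueshov} and to monitor how the nonautonomous force $g$ enters each estimate, everything being made uniform in $g\in\ls$ through \eqref{3.2}. I would first fix the eigenbasis of $-\Delta$ with Dirichlet data, project \eqref{1.1} onto the first $n$ modes, and solve the resulting ODE system locally; the a priori bounds below then continue the solution to all of $[\tau,T]$. The basic estimate comes from testing with $u^\e_t$, which gives $\frac{d}{dt}E(u^\e,u^\e_t)+\|\nabla u^\e_t\|^2=(g,u^\e_t)$ with $E$ the functional in \eqref{3.4}; since \eqref{1.3} forces $(F(u^\e),1)$ to control $\|u^\e\|_{p+1}^{p+1}$, the functional $E$ dominates $\|(u^\e,u^\e_t)\|_\h^2$ up to an additive constant. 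The only nonautonomous term $(g,u^\e_t)$ I bound by $\tfrac14\|\nabla u^\e_t\|^2+C\|g\|^2$ via Poincar\'e and Young; integrating and invoking $\sup_{g\in\ls}\|g\|_{L^2_b(\r;L^2)}\le\|g_0\|_{L^2_b(\r;L^2)}$ yields the $\h$-part and the dissipation integral in \eqref{3.3}, while the same multiplier computation (rigorous once $u^\e_t\in L^2(\tau,T;H^1_0)$ is known) gives the energy identity \eqref{3.4}. The $H^{-2}$ bound on $u^\e_{tt}$ is read off from \eqref{1.1}: every term lies in $H^{-1}$ except $f(u^\e)$, and the supercritical restriction $p<p^{**}$ is exactly what makes $H^2\cap H^1_0\hookrightarrow L^{p+1}$, hence $f(u^\e)\in L^{(p+1)/p}\hookrightarrow H^{-2}$.

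For the partial regularity \eqref{5.19} I would exploit the parabolic smoothing furnished by the strong damping $-\Delta u^\e_t$. Differentiating \eqref{1.1} in $t$ and setting $v=u^\e_t$, the function $v$ solves a strongly damped equation forced by $g_t$, with the variation of the Kirchhoff coefficient and $f'(u^\e)v$ as lower-order terms; here the coefficient derivative produces the manifestly nonnegative contribution $2\e(\nabla u^\e,\nabla v)^2$, while $f'(u^\e)\ge -c_1$ controls the reaction term from below. Testing this equation against a multiplier adapted to $\h_{-1}$ and weighted by $(t-\tau)^2$ makes the strong damping coercive and, after integration and insertion of the $\h$-bound just obtained, produces $(t-\tau)^2\big(\|\nabla v\|^2+\|v_t\|_{H^{-1}}^2\big)\le K_1\big(1+(t-\tau)^2\big)$, which is \eqref{5.19}. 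This is the step that genuinely requires $\partial_t g\in L^2_b(\r;L^2)$, again used uniformly in $g\in\ls$ via \eqref{3.2}.

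The stability and quasi-stability \eqref{3.5}--\eqref{3.6} are where the supercriticality must be circumvented, and I expect this to be the main obstacle. With $z=u^{\e,1}-u^{\e,2}$ the difference solves a strongly damped Kirchhoff equation whose nonlocal coefficient splits as $(1+\e\|\nabla u^{\e,1}\|^2)(-\Delta z)+\e(\|\nabla u^{\e,1}\|^2-\|\nabla u^{\e,2}\|^2)(-\Delta u^{\e,2})$. Testing with $(-\Delta)^{-1}z_t$ keeps the computation at the level of $\h_{-1}=H^1_0\times H^{-1}$ and turns the strong-damping term into the coercive quantity $\|z_t\|^2$; the source is controlled by $\|g_1-g_2\|_{L^2(\tau,t;H^{-1})}$, which with Gronwall gives \eqref{3.5} and, taking $g_1=g_2$, uniqueness. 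The delicate term is $(f(u^{\e,1})-f(u^{\e,2}),(-\Delta)^{-1}z_t)$, which for $p>p^*$ cannot be dominated by the top-order norm $\|\nabla z\|$. I would integrate it by parts in time, writing $(-\Delta)^{-1}z_t=\partial_t(-\Delta)^{-1}z$, so that the resulting integrand $f'(u^{\e,i})u^{\e,i}_t$ can be paired using the partial regularity $u^{\e,i}_t\in H^1_0$ from \eqref{5.19}; the remaining contributions are absorbed into the lower-order, compact remainder $\int_\tau^t\|(z,z_t)\|_{L^2\times H^{-2}}^2\,ds$, yielding the quasi-stability inequality \eqref{3.6}.

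Finally, to upgrade the Galerkin limit to the asserted strong continuity $(u^\e,u^\e_t)\in C([\tau,T];\h)$, I would combine Lemma \ref{210} with J.~Ball's energy method. Aubin--Simon compactness provides strong convergence of the approximations in a lower topology and weak-$*$ convergence in $L^\infty(\tau,T;\h)$, so the limit is weakly continuous into $\h$. The energy identity \eqref{3.4} shows that $t\mapsto E(u^\e(t),u^\e_t(t))$ is continuous; weak lower semicontinuity of the $\h$-norm gives one inequality and the energy identity the reverse, so $\|(u^\e,u^\e_t)(t)\|_\h$ is continuous and the weak continuity is promoted to strong. Throughout, all constants depend on $g$ only through $\|g_0\|_{L^2_b(\r;L^2)}$ and $\|\partial_t g_0\|_{L^2_b(\r;L^2)}$, exactly as stated, so the entire construction is uniform over the symbol space $\ls$.
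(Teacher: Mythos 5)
The paper does not actually prove Theorem \ref{31}; it asserts that one repeats the arguments of \cite{Chueshov} for the autonomous case, modifying only the treatment of $g(x,t)$. Your Galerkin construction, the energy identity via the multiplier $u_t$, the $H^{-2}$ bound on $u_{tt}$ through $H^2\cap H_0^1\hookrightarrow L^{p+1}$ (valid precisely for $p<p^{**}$), the weighted-in-time argument for \eqref{5.19}, and Ball's method for upgrading to $C([\tau,T];\h)$ all match that route and are fine.

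The genuine gap is in part (iii). You propose to handle $(f(u^{\e,1})-f(u^{\e,2}),(-\Delta)^{-1}z_t)$ by writing $(-\Delta)^{-1}z_t=\partial_t(-\Delta)^{-1}z$ and integrating by parts in time. This does not close: the boundary terms $(f(u^{\e,1})-f(u^{\e,2}),(-\Delta)^{-1}z)\big|_\tau^t$ are of size $\|z\|_{p+1}$, which is not dominated by $\|(z,z_t)\|_{\h_{-1}}$, so Gronwall applied to the $\h_{-1}$-functional cannot absorb them; and the interior term $\int|u|^{p-1}|u_t|\,|(-\Delta)^{-1}z|\,dx$ is not even integrable for $p$ near $p^{**}$ given only $u\in L^{p+1}$, $u_t\in H_0^1$, $(-\Delta)^{-1}z\in H^2$ (the H\"older exponents require $p\leq\frac{N+3}{N-3}<p^{**}$). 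Moreover, relying on \eqref{5.19} here is circular in spirit and degenerate at $t=\tau$, whereas \eqref{3.5}--\eqref{3.6} are claimed from $t=\tau$. The mechanism that actually works — displayed explicitly in the paper's proof of Lemma \ref{52}, which is the $\e$-analogue of the same estimate — is to use the multiplier $(-\Delta)^{-1}z_t+\delta z$ and exploit the \emph{lower} bound $f'(s)\geq c_0|s|^{p-1}-c_1$: the $\delta z$ part produces the positive term $\delta c\int_\lo(|u^{\e,1}|^{p-1}+|u^{\e,2}|^{p-1})|z|^2\,dx$, into which the corresponding contribution from the $(-\Delta)^{-1}z_t$ part is absorbed by Young's inequality, the remainder being controlled via $H^{2-\theta}\hookrightarrow L^{p+1}$ and the interpolation $\|z_t\|_{H^{-\theta}}\leq\|z_t\|^{1-\theta}\|z_t\|_{H^{-1}}^{\theta}$. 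Without this absorption device your stability and quasi-stability estimates, and hence uniqueness, are not established in the supercritical range.
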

\medskip

 For any $g\in\ls$,  we  define the solution operator
\begin{equation*}\label{3.7}
 U_g^\e(t,\tau):\h\rightarrow \h,\ \  U^\e_g(t,\tau)(u^\tau_0, u^\tau_1)=(u^\e,u^\e_t)(t), \ \ t\geq \tau,
\end{equation*}
where $u^\e$ is a weak solution of problem \eqref{1.1}-\eqref{1.2}.  Theorem \ref{31} shows that $\{U^\e_g(t,\tau)\},g\in\ls, \e\in[0,1]$ is a family of processes acting on the phase space $\h$. The uniqueness of weak solutions implies the translation identity
\begin{equation}\label{3.8}
U^\e_g(t+s,\tau+s)=U^\e_{T(s)g}(t,\tau), \ \ \forall t\geq \tau, \ \tau\in\r, \  s\geq 0, \  \e\in[0,1].
\end{equation}

\section{Existence of uniform attractors}

 For simplicity,    we omit the superscript $\epsilon$ and denote $u=u^\epsilon$ in the following.

\begin{lemma}\label{41} Let Assumption  \ref{11} be valid, with $g\in\ls$. Then
\begin{enumerate}[$(i)$]
  \item  For any  sequence $\{(\xi_n, g_n)\}\subset \h\times \ls$ with $(\xi_n,g_n)\rightarrow (\xi,g)$ in $\h_{-1}\times \ls$, we have
\begin{equation}\label{4.1}
  U^\e_{g_n}(t,\tau)\xi_n\rightarrow U^\e_g(t,\tau)\xi\ \  \hbox{in} \ \ \h_{-1}, \ \ \forall \e\in[0,1].
\end{equation}
  \item The family of processes $\{U^\e_g(t,\tau)\},g\in\ls$ is norm-to-weak continuous for each $\e\in [0,1]$.
\end{enumerate}
\end{lemma}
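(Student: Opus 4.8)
The plan is to prove part (i) first and then deduce part (ii) as a nearly immediate corollary, since norm-to-weak continuity in the weaker space $\h_{-1}$ combined with a uniform bound in $\h$ will upgrade the convergence to weak convergence in $\h$. For part (i), I would set $z_n = u_n^\e - u^\e$, where $u_n^\e$ (resp. $u^\e$) solves \eqref{1.1}--\eqref{1.2} with data $\xi_n$ and symbol $g_n$ (resp. $\xi$ and $g$), so that $z_n$ satisfies the difference equation. The key observation is that the Lipschitz stability estimate \eqref{3.5} in $\h_{-1}$ already does almost all the work: it gives
\begin{equation*}
\|(z_n, \partial_t z_n)(t)\|_{\h_{-1}}^2 \leq K\Big[\|(z_n,\partial_t z_n)(\tau)\|_{\h_{-1}}^2 + \|g_n - g\|_{L^2(\tau,t;H^{-1})}^2\Big].
\end{equation*}
The first term tends to zero because $\xi_n \to \xi$ in $\h_{-1}$ by hypothesis. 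The main point is therefore to control the source term $\|g_n - g\|_{L^2(\tau,t;H^{-1})}^2$, and this is where the slightly subtle part lies.

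The difficulty is that the convergence $g_n \to g$ in $\ls$ is only \emph{weak} in $L^2(\tau,t;L^2)$ (by the definition of the $L^{2,w}_{loc}$ topology in \eqref{3.1}), so one cannot directly conclude that $\|g_n - g\|_{L^2(\tau,t;H^{-1})} \to 0$. The resolution I would use is the compactness of the embedding $L^2 \hookrightarrow\hookrightarrow H^{-1}$ on the bounded domain $\lo$: since $\{g_n\}$ is bounded in $L^2(\tau,t;L^2)$ by \eqref{3.2} and converges weakly there, an Aubin--Lions/Simon-type argument (or a direct interpolation argument using the compact embedding) yields strong convergence $g_n \to g$ in $L^2(\tau,t;H^{-1})$. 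Concretely, I expect to invoke Lemma~\ref{210}, or more simply the fact that weak $L^2$-convergence composed with a compact spatial embedding gives strong convergence in the lower-order space after integration in time. This step is the main obstacle and must be stated carefully; everything else is a direct application of the prepared estimates.

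With part (i) established, part (ii) follows by the standard norm-to-weak upgrading argument. Suppose $(x_n, \s_n) \to (x, \s)$ in $\h \times \ls$. Then in particular $(x_n,\s_n) \to (x,\s)$ in $\h_{-1} \times \ls$, since convergence in the stronger graph norm of $\h$ implies convergence in $\h_{-1}$; hence by part (i), $U^\e_{\s_n}(t,\tau)x_n \to U^\e_\s(t,\tau)x$ strongly in $\h_{-1}$. On the other hand, since $\{x_n\}$ is bounded in $\h$, the a priori bound \eqref{3.3} of Theorem~\ref{31} shows that $\{U^\e_{\s_n}(t,\tau)x_n\}$ is bounded in $\h$, so it has a subsequence converging weakly in $\h$ to some limit $\eta$. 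By the uniqueness of limits in the weaker topology $\h_{-1}$, every weakly convergent subsequence must have limit $\eta = U^\e_\s(t,\tau)x$, whence the full sequence satisfies $U^\e_{\s_n}(t,\tau)x_n \rightharpoonup U^\e_\s(t,\tau)x$ in $\h$. This is exactly norm-to-weak continuity as required, completing the proof.
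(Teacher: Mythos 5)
Your overall route is the same as the paper's: part (i) follows from the Lipschitz stability estimate \eqref{3.5} once one shows $g_n\to g$ strongly in $L^2(\tau,t;H^{-1})$, and part (ii) is the standard upgrade (strong $\h_{-1}$ convergence from (i) plus the uniform $\h$ bound \eqref{3.3} identifies the weak limit in $\h$), which is exactly what the paper does. You also correctly isolate the one nontrivial step, namely converting the weak convergence $g_n\rightharpoonup g$ in $L^2(\tau,t;L^2)$ into strong convergence in $L^2(\tau,t;H^{-1})$.

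However, your justification of that step contains a genuine error. The claim that ``weak $L^2$-convergence composed with a compact spatial embedding gives strong convergence in the lower-order space after integration in time'' is false: take $g_n(x,t)=\sin(nt)\,\phi(x)$ with $\phi\in L^2$ fixed; then $g_n\rightharpoonup 0$ in $L^2(\tau,t;L^2)$, but $\|g_n\|_{L^2(\tau,t;H^{-1})}\to \tfrac{1}{\sqrt 2}\sqrt{t-\tau}\,\|\phi\|_{H^{-1}}\neq 0$, because the oscillation responsible for the weak convergence lives in the time variable, where no compactness is gained. The same counterexample defeats the proposed ``direct interpolation argument.'' What actually closes the gap --- and what the paper uses --- is that Assumption \ref{11}(ii) and estimate \eqref{3.2} give a uniform bound on $\{\partial_t g_n\}$ in $L^2(\tau,t;L^2)$ as well; it is this time-derivative bound, together with $L^2\hookrightarrow\hookrightarrow H^{-1}$, that makes Lemma \ref{210} (Simon) applicable and yields precompactness of $\{g_n\}$ in $L^2(\tau,t;H^{-1})$, after which the weak limit identifies the strong limit as $g$. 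You name Lemma \ref{210}, but without invoking the hypothesis on $\partial_t g_n$ the lemma does not apply, so you should make that ingredient explicit. Part (ii) of your argument is correct and matches the paper.
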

\begin{proof} (i)  The fact $g_n\rightarrow g$ in $\ls$ implies that
\begin{equation}\label{4.2}
  g_n\rightarrow g\ \ \hbox{in}\ \ L^2(\tau,t;H^{-1}),\ \ \forall t>\tau.
\end{equation}
Indeed, it follows from estimate \eqref{3.2} that both the  sequences  $\{g_n\} $ and $\{\partial_tg_n\} $ are  bounded in $L^2(\tau,t;L^2)$, which implies  that  $\{g_n\} $ is precompact in $L^2(\tau,t;H^{-1})$ for $L^2\hookrightarrow\hookrightarrow H^{-1}$ (see Lemma \ref{210}). So formula \eqref{4.2} holds. The combination of \eqref{4.2}  and stability estimate \eqref{3.5} yields \eqref{4.1}.

(ii)   Let $(\xi_n,g_n)\rightarrow (\xi,g)$ in $\h\times \ls$. By \eqref{4.1},
\begin{equation*}
 U^\e_{g_n}(t,\tau)\xi_n= (u^{ n},u_t^{ n})(t)\rightarrow (u,u_t)(t) \ \  \hbox{in} \ \ \h_{-1}.
\end{equation*}
 By the boundedness of $\{(u^{ n},u_t^{ n})(t)\}$ in $\h$ (see \eqref{3.3}),
\begin{equation}\label{4.4}
 u^{ n}(t)\rightharpoonup u(t)\ \ \hbox{in}\ \  L^{p+1},\ \ u_t^{ n}(t)\rightharpoonup u_t(t)\ \ \hbox{in} \ \ L^2.
\end{equation}
Therefore,
\begin{equation*}
 (u^{ n},u_t^{ n})(t)\rightharpoonup (u,u_t)(t) \ \  \hbox{in} \ \ \h.
\end{equation*}
\end{proof}

\begin{lemma}\label{42}
Let Assumption  \ref{11} be valid, with $g\in\ls$. Then the family of processes $\{U^\e_g(t,\tau)\}, g\in\ls, \e\in[0,1]$ has a uniformly ($w. r. t.\  g\in\ls$ and $\e\in[0,1]$) absorbing set $\b=\{\xi\in\h|\|\xi\|_\h\leq R_0\}$.
\end{lemma}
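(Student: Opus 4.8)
The plan is to construct a dissipative (Lyapunov) estimate in the energy space $\h$ by combining the energy identity \eqref{3.4} with a standard multiplier, and then to close the argument with a Gronwall inequality together with the translation boundedness \eqref{3.2}. For a weak solution $u=u^\e$ I would work with the functional
$$\Phi(t)=E(u,u_t)+\eta(u_t,u)+\tfrac{\eta}{2}\|\nabla u\|^2,$$
where $\eta>0$ is small. Using the lower bound in \eqref{1.3} one first records the coercivity $F(s)\ge c\,|s|^{p+1}-C$ and $sf(s)\ge c\,|s|^{p+1}-Cs^2-C$, whence $E$ (and hence $\Phi$, since $|(u_t,u)|\le CE+C$ for $\eta$ small) is equivalent to the square of the $\h$-norm up to an additive constant,
$$c_3\|(u,u_t)\|_\h^2-C\le \Phi(t)\le Q\big(\|(u,u_t)\|_\h\big).$$
The crucial observation for uniformity is that every $\e$-dependent contribution, namely $\tfrac{\e}{2}\|\nabla u\|^4$ in $E$ and $\e\|\nabla u\|^4$ in the multiplier identity, enters with a favorable sign, so all constants below are independent of $\e\in[0,1]$.

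Next I would derive the differential inequality. Differentiating \eqref{3.4} gives $\frac{\d}{\d t}E=-\|\nabla u_t\|^2+(g,u_t)$, while multiplying \eqref{1.1} by $u$ yields
$$\frac{\d}{\d t}(u_t,u)=\|u_t\|^2-\tfrac12\frac{\d}{\d t}\|\nabla u\|^2-\|\nabla u\|^2-\e\|\nabla u\|^4-(f(u),u)+(g,u).$$
(These manipulations are legitimate since $u\in H^1_0\cap L^{p+1}$ makes $(f(u),u)$ finite by H\"older, and they are justified exactly as in \cite{Chueshov}.) Adding, and using the term $\tfrac{\eta}{2}\|\nabla u\|^2$ to cancel the $\frac{\d}{\d t}\|\nabla u\|^2$ contribution, the coupling terms $(g,u_t)$ and $\eta(g,u)$ are absorbed by $-\|\nabla u_t\|^2$ (via Poincar\'e) at the cost of a $C\|g\|^2$ remainder, and $\eta\|u_t\|^2$ is dominated by $\tfrac12\|\nabla u_t\|^2$ once $\eta$ is small. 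The supercritical term is treated by the coercivity of $f$: writing $-\eta(f(u),u)\le-\eta c\|u\|_{p+1}^{p+1}+\eta C\|u\|^2+\eta C$ and then \emph{absorbing} $\|u\|^2$ into $\|u\|_{p+1}^{p+1}$ by Young's inequality (valid since $p+1>2$), one reaches
$$\frac{\d}{\d t}\Phi\le-\tfrac14\|\nabla u_t\|^2-\eta\|\nabla u\|^2-\eta\e\|\nabla u\|^4-\tfrac{\eta c}{2}\|u\|_{p+1}^{p+1}+C(1+\|g\|^2).$$
Comparing the right-hand side with $\kappa\Phi$ and choosing $\kappa>0$ small (independent of $\e$) gives $\frac{\d}{\d t}\Phi+\kappa\Phi\le C(1+\|g(t)\|^2)$.

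Finally I would integrate. Gronwall's lemma yields
$$\Phi(t)\le \Phi(\tau)e^{-\kappa(t-\tau)}+C\int_\tau^t e^{-\kappa(t-s)}(1+\|g(s)\|^2)\,\d s,$$
and the integral is bounded by $C_\kappa\big(1+\|g\|_{L^2_b(\r;L^2)}^2\big)$ for every translation-bounded $g$; by \eqref{3.2} this bound is uniform over $g\in\ls$ and, as noted, over $\e\in[0,1]$. Converting back through the coercivity bounds, for any bounded $B\subset\h$ of radius $r$ one gets an estimate of the form $\|(u,u_t)(t)\|_\h^2\le C_1 Q(r)e^{-\kappa(t-\tau)}+\tfrac12 R_0^2$, so for $t\ge T_0(B,\tau)$ the trajectory lies in $\b=\{\xi\in\h:\|\xi\|_\h\le R_0\}$ with $R_0$ depending only on $\|g_0\|_{L^2_b(\r;L^2)}$, giving the required uniform absorbing set. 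I expect the main obstacle to be precisely the supercritical growth $p>p^*$: because $H^1\not\hookrightarrow L^{p+1}$, the $L^{p+1}$-component of the $\h$-norm cannot be controlled by the gradient, so one must both extract coercivity in $\|u\|_{p+1}^{p+1}$ from \eqref{1.3} and dispose of the lower-order $\|u\|^2$ terms through the $L^{p+1}$-term rather than through Poincar\'e; a second point needing care is keeping every constant free of $\e$ throughout.
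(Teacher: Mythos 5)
Your proposal is correct and follows essentially the same route as the paper: the functional $\Phi$ is exactly the paper's $\Gamma$ (obtained from the multiplier $u_t+\delta u$), the coercivity of $\Gamma$ and of the dissipation term is extracted from \eqref{1.3} in the same way (with the lower-order $\|u\|^2$ absorbed into $\|u\|_{p+1}^{p+1}$), and the conclusion follows from the same Gronwall inequality combined with the translation bound \eqref{3.2}, with all constants uniform in $\e\in[0,1]$.
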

\begin{proof}  Using the multiplier $u_t+\delta u \ (= u^\e_t+\delta u^\e$) in Eq. \eqref{1.1}, we obtain
\begin{equation*}
  \frac{\d}{\d t}\Gamma(\xi_u(t))+\Psi(\xi_u(t))=0,
\end{equation*}
where $\xi_u=(u,u_t)$,
\begin{align}
       & \Gamma(\xi_u)=\frac{1}{2}\Big[\|u_t\|^2+\|\nabla u\|^2+\frac{\e}{2}\|\nabla u\|^4+2(F(u),1)\Big]+\delta\Big[\frac{1}{2}\|\nabla u\|^2+(u_t,u)\Big],\label{4.0} \\
      & \Psi(\xi_u)=\|\nabla u_t\|^2-\delta\|u_t\|^2+\delta\Big[\|\nabla u\|^2+\e\|\nabla u\|^4+(F(u),u)\Big]-(g,u_t+\delta u).\nonumber
 \end{align}
Assumption  \eqref{1.3} implies that
\begin{equation}\label{4.4}
  \begin{split}
  &\frac{c_0}{2p}|u|^{p+1}-C\leq f(u)u\leq C(1+|u|^{p+1}),\\
&\frac{c_0}{2p(p+1)}|u|^{p+1}-C\leq F(u)\leq C(1+|u|^{p+1}),\\
  & f(u)u-F(u)+\frac{c_1}{2}|u|^2\geq 0.
  \end{split}
\end{equation}
Thus  a simple calculation shows that
\begin{equation*}
  \begin{split}&
  \kappa \|\xi_u\|_\h^2-C\leq \frac{1}{4}\|u_t\|^2+\frac{1}{2}\|\nabla u\|^2+\frac{c_0}{2p(p+1)}\|u\|_{p+1}^{p+1}-C
      \\
     &\leq \Gamma(\xi_u)\leq C\Big[\|u_t\|^2+\|\nabla u\|^2+\|u\|_{p+1}^{p+1}+\e\|\nabla u\|^4+1\Big],  \\
            &
          \Psi(\xi_u)  \geq \Big(1-\frac{2\delta}{\lambda_1}\Big)\|\nabla u_t\|^2+\delta\Big[\frac{1}{2}\|\nabla u\|^2+\e\|\nabla u\|^4+\frac{c_0}{2p}\|u\|_{p+1}^{p+1}\Big]-C(1+\|g\|^2) \\
            & \ \ \ \ \ \ \ \ \ \ \geq \kappa \Gamma(\xi_u)-C(1+\|g\|^2)
         \end{split}
\end{equation*}
for $\delta>0$ suitably small, where $\lambda_1$ is the first eigenvalue of $-\Delta$ with Dirichlet boundary condition and $\kappa$ is a small positive constant. Hence,
\begin{align} &
  \frac{\d}{\d t}\Gamma(\xi_u(t))+\kappa \Gamma(\xi_u(t))\leq C(1+\|g(t)\|^2),\nonumber\\
& \|\xi_u(t)\|_\h^2\leq Q(\|\xi_u(\tau)\|_\h)e^{-\kappa(t-\tau)}+C(1+\|g_0\|^2_{L^2_b(\r;L^2)}),\ \ \forall t\geq \tau,\label{4.5}
\end{align}
for all $\e\in [0,1], g\in \ls$ and $\xi_u(\tau)\in\h$, where $\xi_u(t)=U^\e_g(t,\tau)\xi_u(\tau), Q$ is a monotone positive function.

Let
\begin{equation*}
  \b=\{\xi\in\h|\|\xi\|_\h\leq R_0\}\ \ \hbox{with}\ \ R_0^2=2C(1+\|g_0\|^2_{L^2_b(\r;L^2)}).
\end{equation*}
Estimate \eqref{4.5} shows that  $\b$ is a uniformly ($w. r. t.\  g\in\ls$ and $\e\in [0,1]$) absorbing set of the family of processes $\{U^\e_g(t,\tau)\},g\in\ls, \e\in[0,1]$.
\end{proof}

\begin{theorem}\label{43} Let Assumption  \ref{11} be valid, with $g\in\ls$. Then  the family of processes $\{U^\e_g(t,\tau)\}, g\in\ls$ has in $\h$ a compact uniform ($w. r. t.\  g\in\ls$) attractor $\mathcal{A}_\ls^\e$  for each $\e\in [0,1]$, and
\begin{equation}\label{4.6}
\mathcal{A}_\ls^\e=\omega^\e_{0,\ls}(\b)=\bigcup_{g\in\ls}\K^\e_g(s), \ \ \forall s\in\r.
\end{equation}
\end{theorem}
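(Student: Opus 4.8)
The plan is to derive Theorem~\ref{43} from the abstract existence criterion Lemma~\ref{26}. Four of its five hypotheses are already available: the symbol space $\ls$ is a compact metric space on which the translation semigroup is continuous (Section~3), the family $\{U^\e_g(t,\tau)\}$ obeys the translation identity \eqref{3.8}, it is norm-to-weak continuous by Lemma~\ref{41}, and it has the bounded uniformly absorbing set $\b$ by Lemma~\ref{42}. Thus the whole theorem reduces to verifying hypothesis~(iii) of Lemma~\ref{26}, that $\{U^\e_g(t,\tau)\}$ is uniformly asymptotically compact in $\h$ (Definition~\ref{23}); once this is in hand, Lemma~\ref{26} yields at once the compact uniform attractor together with the structural identity \eqref{4.6} through \eqref{2.2}, so \eqref{4.6} needs no separate treatment.

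For the uniform asymptotic compactness I would argue in three stages, beginning in the weaker space $\h_{-1}=H^1_0\times H^{-1}$. Applying Lemma~\ref{28} with $E=\h_{-1}$, I would manufacture a contractive function from the quasi-stability estimate \eqref{3.6} (after taking square roots): on the absorbing set the term $e^{-\kappa(t-\tau)}\|(z,z_t)(\tau)\|^2_{\h_{-1}}$ is bounded and drops below any prescribed $\delta$ once $T$ is large, while the residual integral of $\|(z,z_t)(s)\|^2_{L^2\times H^{-2}}+\|(g_1-g_2)(s)\|^2_{H^{-1}}$ supplies $\phi_T$. That $\phi_T$ is contractive follows from the a~priori bounds \eqref{3.3} and the Aubin--Lions--Simon lemma (Lemma~\ref{210}), which render $\{u^n\}$ and $\{u^n_t\}$ precompact in $L^2(0,T;L^2)$ and $L^2(0,T;H^{-2})$, together with the convergence $g_n\to g$ in $L^2(\tau,t;H^{-1})$ from \eqref{4.2}. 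Lemma~\ref{28} then gives asymptotic compactness in $\h_{-1}$: along a subsequence $u^n(t_n)\to u^*$ strongly in $H^1_0$ and $u^n_t(t_n)\to w^*$ strongly in $H^{-1}$.

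The remaining two stages promote this to $\h$. Restarting the process one time unit before $t_n$, where the state lies in $\b$, the partial-regularity bound \eqref{5.19} shows $\{u^n_t(t_n)\}$ is bounded in $H^1_0$; since $H^1_0\hookrightarrow\hookrightarrow L^2$, the $H^{-1}$-convergence of $u^n_t(t_n)$ improves to strong convergence in $L^2$, so $w^*=u^*_t$. There remains strong convergence in $L^{p+1}$, obstructed by the supercritical growth $p>p^*$, and for this I would use Ball's energy method. Passing to a limit symbol $g^*\in\ls$ via the translation identity \eqref{3.8} and the compactness of $\ls$, and using norm-to-weak continuity, one realizes $u^*$ as a genuine solution value, so the energy identity \eqref{3.4} holds both along the sequence and for the limit. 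Passing to the limit in \eqref{3.4} over a receding interval, with weak lower semicontinuity of the dissipation $\int\|\nabla u^n_t\|^2$ and convergence of the forcing term, one obtains $E(u^n(t_n))\to E(u^*)$. Because the kinetic and gradient parts of $E$ already converge by the first two stages, the potential part must converge too, $(F(u^n(t_n)),1)\to(F(u^*),1)$; together with the pointwise-a.e.\ convergence from the strong $L^2$ limit and the coercive bound $F(s)\ge c|s|^{p+1}-C$ implied by \eqref{1.3}, this forces $\|u^n(t_n)\|_{p+1}\to\|u^*\|_{p+1}$, and then uniform convexity of $L^{p+1}$ gives strong convergence in $L^{p+1}$. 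Collecting the three stages yields precompactness of $\{U^\e_{g_n}(t_n,\tau)\xi_n\}$ in $\h$.

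I expect the energy-balance step to be the main obstacle. The two inequalities $\liminf_n E(u^n(t_n))\ge E(u^*)$ (from weak lower semicontinuity of $E$, which itself needs the a.e.\ convergence) and $\limsup_n E(u^n(t_n))\le E(u^*)$ are not symmetric: the reverse inequality is the hard one, since the persistent forcing prevents a naive passage to the infinite-time dissipation integral, and it must be closed through Ball's iteration over receding initial times combined with the compactness of $\ls$ to control the weak limit as a kernel trajectory. A further delicacy is that $(F(u^n),1)$ is not weakly continuous under the supercritical growth, so nothing short of the energy balance, the a.e.\ convergence, and the coercivity of $F$ will pin down the $L^{p+1}$ limit. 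It is exactly here that the quasi-stability estimate and the partial regularity are indispensable, for they deliver the strong $H^1_0$ and $L^2$ convergences that isolate the potential term within $E$.
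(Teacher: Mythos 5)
Your proposal follows the same overall route as the paper: reduce Theorem \ref{43} to hypothesis (iii) of Lemma \ref{26}; obtain uniform asymptotic compactness in $\h_{-1}$ from Lemma \ref{28} together with the quasi-stability estimate \eqref{3.6}, with the contractive function supplied by Aubin--Lions--Simon compactness of the orbits in $L^2(0,T;L^2\times H^{-2})$ and of $\{g_n\}$ in $L^2(0,T;H^{-1})$; then recover the missing $L^2$ and $L^{p+1}$ components by Ball's energy method, finishing with a.e.\ convergence, a Fatou bracket of the form $F\pm C_1|s|^{p+1}\geq -C$, and uniform convexity. The structure formula \eqref{4.6} does indeed come for free from \eqref{2.2}. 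One genuine divergence: you treat the velocity component by restarting one time unit earlier and invoking the partial regularity \eqref{5.19} to bound $u^n_t(t_n)$ in $H^1_0$, so that the $\h_{-1}$-convergence upgrades to strong $L^2$-convergence by interpolation. The paper does not use \eqref{5.19} in this proof (it reserves it for the regular absorbing set $\b_0$ in Section 5) and instead reads $\|u^n_t(0)\|\to\|u_t(0)\|$ off the convergence of the energy functional, simultaneously with the potential term. Your variant is legitimate and decouples the two components, at the price of importing the higher regularity earlier.

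The one place where your write-up would not close as literally stated is the energy-balance step: you propose ``passing to the limit in \eqref{3.4} over a receding interval.'' With the unweighted identity the initial-time term $E(u^n(-T),u^n_t(-T))$ enters with coefficient $1$; it is bounded but contributes $O(1)$, and weak lower semicontinuity gives $\liminf_n E(u^n(-T),u^n_t(-T))\geq E(u(-T),u_t(-T))$, which is the wrong direction for the needed $\limsup_n E(u^n(0),u^n_t(0))\leq E(u(0),u_t(0))$. The paper's remedy --- which is the actual content of the Ball technique you name but do not spell out --- is to add the multiplier $\delta u$ and work with the perturbed functional $\Gamma$ of \eqref{4.0}, which satisfies $\frac{\d}{\d t}\Gamma+\delta\Gamma=\Lambda$ and hence the representation \eqref{4.16}: the initial-time term then carries the prefactor $e^{-\delta T}$ and is killed by letting $T\to\infty$, while the $\limsup$ inequality for the integral term follows from weak lower semicontinuity of $\int e^{\delta s}\|\nabla u^n_t\|^2\,\d s$ and Fatou applied to $f(u^n)u^n-F(u^n)+\frac{c_1}{2}|u^n|^2\geq 0$ inside $\Lambda$. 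Since you explicitly flag this as the crux and point to Ball's iteration, I read it as an under-specified step rather than a wrong turn, but the exponential weight (equivalently, the $\delta u$ multiplier) is not optional; without it the argument does not close.
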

\begin{proof}  Since the family of  processes  $\{U^\e_g(t,\tau)\}, g\in\ls$ satisfies  translation identity \eqref{3.8}, it is norm-to-weak continuous for each $\e \in [0,1]$ (see Lemma \ref{41}) and  has a uniformly
($w. r. t.\  g\in\ls$ and $\e \in [0,1]$) absorbing set $\b$ (see Lemma \ref{42}), by Lemma \ref{26},     it is sufficient to prove Theorem \ref{43} to show the precompactness of the sequence $\{U^\e_{g_n}(t_n,\tau)\xi_n\}$
 in $\h$, where $t_n\rightarrow+\infty$ as $n\rightarrow\infty$ (see Def. \ref{23}: (ii)). By translation identity \eqref{3.8},
\begin{equation}
U^\e_{g_n}(t_n,\tau)\xi_n=U^\e_{T(t_n)g_n}(0,\tau-t_n)\xi_n, \ \ \hbox{where}\ \ T(t_n)g_n\in \ls\ \ \hbox{and}\ \ \tau_n=\tau-t_n\rightarrow -\infty.
\end{equation}
Without loss of generality, it is enough to show that for every $\e\in [0,1]$,   any sequences $\{g_n\}\subset \ls$, $\{\xi_n\}\subset \b$ and $\tau_n\rightarrow -\infty$, the  sequence $\{U^\e_{g_n}(0,\tau_n)\xi_n\}$ is precompact in $\h$.

 Let
\begin{equation}\label{4.7}
 (u^n,u_t^n)(t)=U^\e_{g_n}(t,\tau_n)\xi_n, \ \ t\geq \tau_n.
\end{equation}
Due to Lemma \ref{29} (taking $\tau_0=0$ there) and the fact that $\b$ is a uniformly ($w. r. t.\  g\in\ls$) absorbing set of the family of processes $\{U^\e_g(t,\tau)\},g\in\ls$, there exists a positive constant $T_0$ independent of $\tau$  such that
\begin{equation}\label{4.8}
  \bigcup_{g\in\ls}U^\e_g(t,\tau)\b\subset \b, \ \ t\geq \tau+T_0, \ \  \forall \tau\in\r.
\end{equation}
For any fixed $T\in\n$, there exists a $N>0$ such that $-T\geq \tau_n+T_0$ as  $n\geq N$. Hence when $n\geq N$, by  \eqref{4.8},
\begin{equation*}
  U_{g_n}(t,\tau_n)\xi_n=U_{g_n}(t,-T)U_{g_n}(-T,\tau_n)\xi_n\in U_{g_n}(t,-T)\b,\ \ t\in[-T,0].
\end{equation*}
Therefore (see \eqref{3.3}),
\begin{align*}&
 \{u^n\} \ \ \hbox{is   bounded   in}\ \ L^\infty(-T,0;H_0^1\cap L^{p+1});\\
 &  \{u_t^n\} \ \ \hbox{is   bounded   in}\ \ L^\infty(-T,0; L^{2})\cap L^2(-T,0; H_0^1);\\
&
  \{u_{tt}^n\} \ \ \hbox{is  bounded  in}\ \ L^\infty(-T,0; H^{-2}),
\end{align*}
and (subsequence if necessary)
\begin{equation}\label{4.9}
  \begin{split}
     & (u^n,u_t^n)\rightarrow  (u,u_t)\ \ \hbox{weakly}^*\ \hbox{in}\ \ L^\infty(-T,0;\h);\\
     &(u^n,u_t^n)(t)\rightharpoonup (u,u_t)(t) \ \ \hbox{in}\ \ \h,\  t\in[-T,0];\\
&u_t^n\rightharpoonup u_t \ \ \hbox{in}\ \ L^2(-T,0; H_0^1); \\
&   g_n\rightarrow g\ \ \hbox{in}\ \ \ls,
  \end{split}
\end{equation}
where we have used  the compactness of $\ls$. By Lemma \ref{210},
\begin{align}\label{4.9'}
 &u^n\rightarrow  u \ \ \hbox{in}\ \  C([-T,0];H^{1-\delta})\ \ \hbox{and }\ \  a.e. \ (x,t)\in \Omega\times [-T,0];\nonumber\\
&
 u_t^n\rightarrow u_t \ \ \hbox{in}\ \ L^2(-T,0; L^2),
\end{align}
where $\delta\in(0,1)$.  It follows from  estimate \eqref{3.6} that
\begin{equation}\label{4.10}
\begin{split}
  \|U^\e_{g_1}(t,0)x-U^\e_{g_2}(t,0)y\|_{\h_{-1}}^2  \leq &e^{-\kappa t}\|x-y\|^2_{\h_{-1}}+C\|g_1-g_2\|^2_{L^2(0,t;H^{-1})} \\
    &  +C\int_0^t\|U^\e_{g_1}(s,0)x-U^\e_{g_2}(s,0)y\|^2_{L^2\times H^{-2}}ds,\ \ t\geq 0,
\end{split}
\end{equation}
for any  $\e\in [0,1],  x,y\in\b$ and $g_1, g_2\in\ls$, where  $C=C(t, R_0,  \|g_0\|_{L^2_b(\r;L^2)})$. For any sequence $\{g_n\}\subset \ls, \{g_n\}$ is  precompact  in $L^2(0,t;  H^{-1})$ for $g_n\rightarrow g$ in $\ls$ (see \eqref{4.2}).
By the similar arguments as \eqref{4.9'}, we obtain that
\begin{align*}
    \bigcup_{g\in\ls}U_g^\e(\cdot,0)\b \ \ \hbox{is   precompact  in}\ \ L^2(0,t; L^2\times H^{-2}).
    \end{align*}
Thus, it follows from \eqref{4.10} that for any $\delta> 0$, there exist $T=T(\b,\delta)>0$ and a contractive function
\begin{equation*}
  \Psi_T(x,y;g_1,g_2)=C\Big(\int_0^T\|g_1(s)-g_2(s)\|^2_{H^{-1}}+ \|U^\e_{g_1}(s,0)x-U^\e_{g_2}(s,0)y\|^2_{L^2\times H^{-2}}ds\Big)^{\frac{1}{2}}
\end{equation*}
defined on $(\b\times\b)\times(\ls\times\ls)$ such that
\begin{equation*}
 \|U^\e_{g_1}(t,0)x-U^\e_{g_2}(t,0)y\|_{\h_{-1}}\leq \delta+ \Psi_T(x,y;g_1,g_2).
\end{equation*}
By Lemma \ref{28}, the family of processes $\{U^\e_g(t,\tau)\}, g\in\ls$ is uniformly ($w. r. t.\  g\in\ls$) asymptotically compact in $\h_{-1}$. Therefore (subsequence if necessary),
\begin{equation}\label{4.11}
 (u^n,u_t^n)(-T)=U^\e_{g_n}(-T,\tau_n)\xi_n\rightarrow  (u,u_t)(-T) \ \ \hbox{in}\ \ \h_{-1}.
\end{equation}
By formula \eqref{4.1} and the uniqueness of the limit,
\begin{align}
 & (u^n,u_t^n)(t)=U^\e_{g_n}(t,-T)(u^n,u_t^n)(-T)\nonumber\\
 & \rightarrow U^\e_{g}(t,-T)(u,u_t)(-T)=(u,u_t)(t)\ \ \hbox{in}\ \  \h_{-1},\ \ \forall t\in[-T, 0]. \label{4.12}
\end{align}
So
\begin{equation}\label{4.13}
 u^n(t)\rightarrow u(t)\ \ \hbox{in}\ \ H^1_0,\ \  t\in [-T,0].
\end{equation}

By the standard diagonal process, we can extract a subsequence (still denoted by itself) such that \eqref{4.9} and \eqref{4.11}-\eqref{4.13} hold for all $T\in\n$.
\medskip

Rewrite energy identity \eqref{3.4} as the form
\begin{equation}\label{4.16'}
  \frac{\d}{\d t}E(u,u_t)+\|\nabla u_t\|^2=(g,u_t).
\end{equation}
Using the multiplier $\delta u$ in Eq. \eqref{1.1} and adding the resulting expression to \eqref{4.16'},
we obtain
\begin{equation}\label{4.15}
  \frac{\d}{\d t}\Gamma(u,u_t)+\delta \Gamma(u,u_t)=\Lambda(u,u_t),
\end{equation}
where $\Gamma(u,u_t)$ is as shown in \eqref{4.0} and
\begin{equation*}
  \begin{split}
    \Lambda(u,u_t) = &\delta\Big[\frac{3}{2}\|u_t\|^2-\frac{1}{2}\|\nabla u\|^2-\frac{3\e}{4}\|\nabla u\|^4+\delta\Big(\frac{1}{2}\|\nabla u\|^2+(u_t,u)\Big)\Big] \\
      &+(g, u_t+\delta u)-\|\nabla u_t\|^2-\delta\int_\lo[f(u)u-F(u)]dx.
  \end{split}
\end{equation*}
It follows from \eqref{4.15} that
\begin{equation}\label{4.16}
 \Gamma\Big((u^n,u^n_t)(0)\Big)=e^{-\delta T} \Gamma\Big((u^n,u^n_t)(-T)\Big)+e^{-\delta T}\int_{-T}^0e^{\delta s}\Lambda\Big((u^n,u^n_t)(s)\Big)ds,
\end{equation}
and the formula \eqref{4.16} also holds for $(u,u_t)$. By virtue of \eqref{4.9}-\eqref{4.9'}, \eqref{4.13} and the Lebesgue dominated convergence theorem,
\begin{align}
 & \lim_{n\rightarrow \infty} \int_{-T}^{0} e^{\delta s}\Big\{\delta\Big[\frac{3}{2}\|u^n_t\|^2-\frac{1}{2}\|\nabla u^n\|^2-\frac{3\e}{4}\|\nabla u^n\|^4+\delta\Big(\frac{1}{2}\|\nabla u^n\|^2+(u_t^n,u^n)\Big)\Big]+(g_n, u_t^n+\delta u^n)\Big\}ds\nonumber\\
      &= \int_{-T}^{0} e^{\delta s}\Big\{\delta\Big[\frac{3}{2}\|u_t\|^2-\frac{1}{2}\|\nabla u\|^2-\frac{3\e}{4}\|\nabla u\|^4+\delta\Big(\frac{1}{2}\|\nabla u\|^2+(u_t,u)\Big)\Big]+(g, u_t+\delta u)\Big\}ds.\label{4.17}
 \end{align}
It follows from \eqref{4.9} that
\begin{equation}\label{4.18}
 \int_{-T}^0e^{\delta s}\|\nabla u_t(s)\|^2ds\leq  \liminf_{n\rightarrow \infty}\int_{-T}^0e^{\delta s}\|\nabla u^n_t(s)\|^2ds.
\end{equation}
By \eqref{4.9'},
\begin{equation*}
f(u^n)u^n-F(u^n)+\frac{c_1}{2}|u^n|^2\rightarrow f(u)u-F(u)+\frac{c_1}{2}|u|^2\ \ a.e. \ (x,t)\in \Omega\times [-T,0].
\end{equation*}
hence by formula \eqref{4.4} and the Fatou lemma,
\begin{equation}\label{4.19}
  \int_{-T}^0\int_\lo e^{\delta s}[f(u)u-F(u)]dxds\leq \liminf_{n\rightarrow \infty}\int_{-T}^0\int_\lo e^{\delta s}[f(u^n)u^n-F(u^n)]dxds.
\end{equation}
The combination of \eqref{4.17}-\eqref{4.19} yields
\begin{equation}\label{4.20}
\limsup_{n\rightarrow \infty} \int_{-T}^0e^{\delta s}\Lambda\Big((u^n,u^n_t)(s)\Big)ds\leq \int_{-T}^0e^{\delta s}\Lambda\Big((u,u_t)(s)\Big)ds.
\end{equation}
Therefore, taking account of the boundedness of $\Gamma((u^n,u^n_t)(-T))$, we infer from \eqref{4.16} and \eqref{4.20} that
\begin{equation*}
\begin{split}
\limsup_{n\rightarrow \infty}\Gamma((u^n,u^n_t)(0))&\leq Ce^{-\delta T}+e^{-\delta T}\int_{-T}^0e^{\delta s}\Lambda((u,u_t)(s))ds\\
&=Ce^{-\delta T}+\Gamma((u,u_t)(0))-e^{-\delta T}\Gamma((u,u_t)(-T)).
\end{split}
\end{equation*}
Letting $T\rightarrow +\infty$, we obtain
\begin{equation*}
 \limsup_{n\rightarrow \infty}\Gamma((u^n,u^n_t)(0))\leq \Gamma((u,u_t)(0))\leq \liminf_{n\rightarrow \infty}\Gamma((u^n,u^n_t)(0)),
\end{equation*}
where we have used \eqref{4.8}-\eqref{4.9}, \eqref{4.12} and the Fatou lemma in the second inequality. Therefore,
\begin{equation*}
 \lim_{n\rightarrow \infty}\Gamma((u^n,u^n_t)(0))= \Gamma((u,u_t)(0)),
\end{equation*}
which implies (see \eqref{4.0})
\begin{equation}\label{4.21}
  \lim_{n\rightarrow \infty}\|u^n_t(0)\|=\|u_t(0)\|, \ \ \lim_{n\rightarrow \infty}\int_\lo F(u^n(0)) dx=\int_\lo F(u(0))dx.
\end{equation}
By \eqref{4.4}, the Fatou lemma and  \eqref{4.21},
\begin{equation*}
   \int_\Omega\Big( F(u(0))\pm C_1 |u(0)|^{p+1}\Big) dx\leq  \int_\Omega F(u(0))dx+\liminf_{n\rightarrow\infty}\pm C_1\int_\Omega|u^n(0)|^{p+1}dx,
 \end{equation*}
where $C_1= \frac{c_0}{2p(p+1)}$, that is,
\begin{align}
  &\limsup_{n\rightarrow\infty}\|u^n(0)\|^{p+1}_{p+1}\leq \|u(0)\|^{p+1}_{p+1}\leq \liminf_{n\rightarrow\infty}\|u^n(0)\|^{p+1}_{p+1},\nonumber\\
   &\|u(0)\|_{p+1}=\lim_{n\rightarrow \infty}\|u^n(0)\|_{p+1}.\label{4.22}
\end{align}
By \eqref{4.9},
\begin{equation}\label{4.14}
 u^n(0)\rightharpoonup u(0)\ \ \hbox{in}\ \ L^{p+1},\ \ u_t^n(0)\rightharpoonup u_t(0)\ \ \hbox{in}\ \ L^{2}.
\end{equation}
The combination of  \eqref{4.21}-\eqref{4.14} and  the uniform convexity of $L^{p+1}$ and $L^2$ yields
\begin{equation*}
  U^\e_{g_n}(0,\tau_n)\xi_n= (u^n, u^n_t )(0)\rightarrow (u, u_t)(0)\ \ \hbox{in}\ \ \h,
\end{equation*}
i.e., the family of processes $\{U^\e_g(t,\tau)\}, g\in\ls$ is uniformly ($w. r. t.\  g\in\ls$) asymptotically compact in $\h$. Therefore, by Lemma \ref{26}, we get the conclusion of Theorem \ref{43}.
\end{proof}

\section{Upper semicontinuity of the uniform attractors }
In this section, we discuss  the upper semicontinuity ($w. r. t.\ \e$) of the uniform attractors $\mathcal{A}_\ls^\e$.

\begin{theorem}\label{51} Let Assumption  \ref{11} be valid, with $g\in\ls$. Then the uniform attractors $\mathcal{A}_\ls^\e$ as shown in Theorem \ref{43} is upper semicontinuous at the point $\e_0\geq 0$ in the sense of $\h_{-1}$ topology, i.e.,
\begin{equation}\label{5.1}
 \lim_{\e\rightarrow \e_0}\mathrm{dist}_{\h_{-1}}\{\mathcal{A}_\ls^\e,\mathcal{A}_\ls^{\e_0}\}=0,
\end{equation}
and so does the kernel section $\K_g^\e(s)$, i.e.,
\begin{equation}\label{5.2}
 \lim_{\e\rightarrow \e_0}\mathrm{dist}_{\h_{-1}}\{\K_g^\e(s), \K_g^{\e_0}(s)\}=0,\ \ \forall g\in\ls, \ \ s\in \r.
\end{equation}
\end{theorem}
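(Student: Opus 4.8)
The plan is to deduce \eqref{5.1} and \eqref{5.2} from two facts: the attraction property already contained in Theorem~\ref{43}, and a quantitative continuous dependence of the solutions on $\e$, measured in the weaker space $\h_{-1}$. The weaker topology is essential — it is what tames the supercritical nonlinearity — and this is why the statement is phrased in $\h_{-1}$ rather than in $\h$.

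First I would establish the continuous dependence estimate. Fix $g\in\ls$ and data $x\in\b$, and let $u^\e,u^{\e_0}$ be the weak solutions of \eqref{1.1}-\eqref{1.2} issuing from the same $x$ with the same symbol $g$ but parameters $\e,\e_0$. With $w=u^\e-u^{\e_0}$, the Kirchhoff difference splits as
\[
\e\|\nabla u^\e\|^2\Delta u^\e-\e_0\|\nabla u^{\e_0}\|^2\Delta u^{\e_0}
=\e\big(\|\nabla u^\e\|^2\Delta u^\e-\|\nabla u^{\e_0}\|^2\Delta u^{\e_0}\big)+(\e-\e_0)\|\nabla u^{\e_0}\|^2\Delta u^{\e_0},
\]
so that $w$ obeys exactly the equation underlying the quasi-stability estimate \eqref{3.5}, perturbed only by the extra forcing $(\e-\e_0)\|\nabla u^{\e_0}\|^2\Delta u^{\e_0}$. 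Since $u^{\e_0}$ stays in $\b$, this term is bounded in $H^{-1}$ by $C|\e-\e_0|$ with $C=C(R_0)$ independent of $g$ and $x$; it enters the derivation of \eqref{3.5} precisely as the source $g_1-g_2$ does. Hence, using $w(0)=0$, I obtain
\[
\big\|U^\e_g(T,0)x-U^{\e_0}_g(T,0)x\big\|_{\h_{-1}}^2\leq C(T)\,|\e-\e_0|^2
\]
uniformly for $g\in\ls$ and $x\in\b$, where $C(T)$ depends only on $T$ and $R_0$.

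Next I would assemble \eqref{5.1}. Fix $\eta>0$. Because $\h\hookrightarrow\h_{-1}$, the uniform attractor $\mathcal{A}_\ls^{\e_0}$ of Theorem~\ref{43} attracts the bounded set $\b$ in the $\h_{-1}$ topology, so there is $T>0$ with $\sup_{g\in\ls}\mathrm{dist}_{\h_{-1}}\{U^{\e_0}_g(T,0)\b,\mathcal{A}_\ls^{\e_0}\}<\eta/2$. Given any $a\in\mathcal{A}_\ls^\e$, the structure formula \eqref{4.6} yields a symbol $g$ and a bounded complete trajectory $\xi\in\K^\e_g$ with $\xi(0)=a$; by Lemma~\ref{29} (with $\tau_0=0$) we may rewrite $a=U^\e_g(0,-T)x=U^\e_{g'}(T,0)x$ for some $g'\in\ls$, where $x=\xi(-T)\in\K^\e_{g'}(-T)\subset\mathcal{A}_\ls^\e\subset\b$. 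The triangle inequality then gives
\[
\mathrm{dist}_{\h_{-1}}\{a,\mathcal{A}_\ls^{\e_0}\}
\leq\big\|U^\e_{g'}(T,0)x-U^{\e_0}_{g'}(T,0)x\big\|_{\h_{-1}}
+\mathrm{dist}_{\h_{-1}}\{U^{\e_0}_{g'}(T,0)x,\mathcal{A}_\ls^{\e_0}\},
\]
whose right-hand side is at most $C(T)^{1/2}|\e-\e_0|+\eta/2$. Taking the supremum over $a\in\mathcal{A}_\ls^\e$ and letting $\e\to\e_0$ yields \eqref{5.1}. For \eqref{5.2} I would repeat the argument for a single fixed symbol $g$, replacing the uniform attraction of $\mathcal{A}_\ls^{\e_0}$ by the pullback attraction of the kernel $\{\K^{\e_0}_g(t)\}$ (the pullback attractor of $U^{\e_0}_g$), the continuous dependence estimate being unchanged.

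The main obstacle is the continuous dependence estimate in $\h_{-1}$. The delicate point is that the nonlocal Kirchhoff coefficient and the supercritical $f$ are not Lipschitz in the strong energy space; a strong-topology version of the estimate is therefore out of reach, whereas the $\h_{-1}=H_0^1\times H^{-1}$ computation (with the multiplier $(-\Delta)^{-1}w_t$) already controls $f(u^\e)-f(u^{\e_0})$, exactly as in \eqref{3.5}-\eqref{3.6}. Once the splitting above exhibits the residual $(\e-\e_0)$-term as a harmless $H^{-1}$ forcing and one checks that the resulting constant is uniform over $g\in\ls$ and $x\in\b$, the remainder is triangle-inequality bookkeeping built on Theorem~\ref{43} and the kernel/translation structure.
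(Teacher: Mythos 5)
Your proposal is correct, and its core coincides with the paper's: the key ingredient is an $\h_{-1}$ Lipschitz estimate in $\e$, obtained exactly as you describe by splitting the Kirchhoff difference so that $(\e-\e_0)\|\nabla u^{\e_0}\|^2\Delta u^{\e_0}$ acts as an $O(|\e-\e_0|)$ forcing in $H^{-1}$ inside the derivation of \eqref{3.5}; this is the paper's Lemma \ref{52} (which is stated slightly more generally, for two different initial data in $\b$ as well as two values of $\e$). Your assembly of \eqref{5.1} — attraction of $\mathcal{A}_\ls^{\e_0}$ at a fixed finite time $T$ plus continuous dependence on $[0,T]$ and a triangle inequality — is the same mechanism the paper runs as a contradiction argument; the only cosmetic difference is that you represent a point $a\in\mathcal{A}_\ls^\e$ exactly as $U^\e_{g'}(T,0)x$ via the kernel structure \eqref{4.6} and Lemma \ref{29}, while the paper approximates it to within $\delta/3$ using the $\omega$-limit characterization $\mathcal{A}_\ls^{\e_n}=\omega^{\e_n}_{0,\ls}(\b)$. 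The genuine divergence is in \eqref{5.2}: you invoke the pullback attraction of the kernel sections $\{\K^{\e_0}_g(t)\}$ of the single process $U^{\e_0}_g$, a fact the paper itself only asserts later (Corollary \ref{54}, deferring to Chepyzhov--Vishik) and does not use in the proof of Theorem \ref{51}; instead the paper argues by contradiction, using the compactness of $\mathcal{A}_\ls^{\e_0}$ in $\h_{-1}$ together with \eqref{5.1} to extract, from the full trajectories through $\xi_n\in\K^{\e_n}_g(s_0)$, a limiting bounded full trajectory of $U^{\e_0}_g$ — for which the two-initial-data version of Lemma \ref{52} is essential. Your route is shorter and needs only the equal-data estimate, but it rests on the external pullback-attraction fact; the paper's route is self-contained at the cost of the stronger stability lemma and the diagonal/compactness extraction. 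Both are sound, and there is no circularity in your use of the pullback property since the paper derives it independently of Theorem \ref{51}.
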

In order to  prove  Theorem \ref{51}, we first give   following lemmas.

\begin{lemma} (Lipschitz stability)\label{52}  Under the assumptions of Theorem \ref{51},  we have
\begin{equation}\label{5.3}
  \sup_{g\in\ls}\|U^{\e_1}_g(t,\tau)\xi_1-U^{\e_2}_g(t,\tau)\xi_2\|^2_{\h_{-1}}\leq C_K\Big(\|\xi_1-\xi_2\|^2_{\h_{-1}}+|\e_1-\e_2|^2\Big),\ \ t\in[\tau,T],
\end{equation}
   for any $\xi_i\in\b$ and $\e_i\in [0,1]\  (i=1,2)$, where $C_K=C(\tau, T, R_0, \|g_0\|_{L^2_b(\r; L^2)})$.
\end{lemma}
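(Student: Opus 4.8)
The plan is to reduce \eqref{5.3} to the single-parameter stability mechanism already contained in Theorem \ref{31}(iii). Fix $g\in\ls$ and let $u^1,u^2$ denote the weak solutions of \eqref{1.1}--\eqref{1.2} issued from $\xi_1,\xi_2\in\b$ with parameters $\e_1,\e_2\in[0,1]$, respectively; put $z=u^1-u^2$. Subtracting the two equations and regrouping the two Kirchhoff coefficients about $u^1$, I obtain
\begin{equation*}
z_{tt}-\Delta z_t-(1+\e_1\|\nabla u^1\|^2)\Delta z+\big(f(u^1)-f(u^2)\big)=h,
\end{equation*}
where the entire discrepancy between the two Kirchhoff terms is collected in the source
\begin{equation*}
h=\big(\e_1\|\nabla u^1\|^2-\e_2\|\nabla u^2\|^2\big)\Delta u^2\in H^{-1}.
\end{equation*}
This is precisely the equation governing the difference of two solutions that share the common parameter $\e_1$, apart from the extra right-hand side $h$; thus $h$ will play exactly the role that the forcing difference $g_1-g_2$ plays in \eqref{3.5}.

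First I would control $h$ in $H^{-1}$. Since $\xi_1,\xi_2\in\b$, both trajectories remain in the absorbing ball on $[\tau,T]$, so $\|\nabla u^i(t)\|\le C(R_0)$ and $\|\Delta u^2(t)\|_{H^{-1}}=\|\nabla u^2(t)\|\le C$. Writing
\begin{equation*}
\e_1\|\nabla u^1\|^2-\e_2\|\nabla u^2\|^2=\e_1\big(\|\nabla u^1\|^2-\|\nabla u^2\|^2\big)+(\e_1-\e_2)\|\nabla u^2\|^2
\end{equation*}
and using $\big|\|\nabla u^1\|^2-\|\nabla u^2\|^2\big|\le(\|\nabla u^1\|+\|\nabla u^2\|)\|\nabla z\|$ together with $0\le\e_1\le1$, I arrive at
\begin{equation*}
\|h(t)\|_{H^{-1}}\le C\big(\|\nabla z(t)\|+|\e_1-\e_2|\big)\le C\big(\|(z,z_t)(t)\|_{\h_{-1}}+|\e_1-\e_2|\big),\qquad t\in[\tau,T],
\end{equation*}
with $C=C(R_0)$. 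The first summand is a lower-order self-generated feedback (it is in fact the Kirchhoff-difference term already present in the single-parameter computation that yields \eqref{3.5}), whereas the second is the genuinely new $\e$-perturbation, uniformly controlled by $|\e_1-\e_2|$.

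Next I would repeat the derivation of the $\h_{-1}$-stability estimate \eqref{3.5} for the $z$-equation, now carrying the additional source $h$: testing with $(-\Delta)^{-1}z_t$ and with $z$ and using that the strong damping furnishes the full dissipation $\|z_t\|^2$, the quadratic term $(z,z_t)$, the supercritical nonlinearity $f(u^1)-f(u^2)$ and the time- and solution-dependent coefficient $1+\e_1\|\nabla u^1\|^2$ are handled verbatim as in Theorem \ref{31}(iii), while the new source contributes only the terms $(h,(-\Delta)^{-1}z_t)$ and $(h,z)$, each bounded by $\|h\|_{H^{-1}}\|(z,z_t)\|_{\h_{-1}}$. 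Invoking the $H^{-1}$-bound for $h$ and Young's inequality, the feedback part $\|\nabla z\|^2$ is absorbed into the $\h_{-1}$-norm and one reaches an integral inequality
\begin{equation*}
\|(z,z_t)(t)\|_{\h_{-1}}^2\le K\|\xi_1-\xi_2\|_{\h_{-1}}^2+K(T-\tau)|\e_1-\e_2|^2+K\int_\tau^t\|(z,z_t)(s)\|_{\h_{-1}}^2\,\d s.
\end{equation*}
Gronwall's inequality on $[\tau,T]$ then yields \eqref{5.3} with $C_K=C(\tau,T,R_0,\|g_0\|_{L^2_b(\r;L^2)})$. Since, by \eqref{3.2}, every constant here depends on $g$ solely through $\|g_0\|_{L^2_b(\r;L^2)}$, the bound is uniform in $g\in\ls$, which gives the supremum in \eqref{5.3}.

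The main obstacle is not the $\e$-perturbation, which enters cleanly as the $H^{-1}$-bounded source $(\e_1-\e_2)\|\nabla u^2\|^2\Delta u^2$, but rather the simultaneous treatment of the supercritical nonlinearity $f(u^1)-f(u^2)$ and the solution-dependent Kirchhoff coefficient within the weak space $\h_{-1}$. This difficulty has, however, already been overcome for the single-parameter case in Theorem \ref{31}(iii); the present lemma only needs to transport the extra source $h$ through that argument and to check that its delicate portion reduces to the already-controlled $\|\nabla z\|$ feedback plus a remainder proportional to $|\e_1-\e_2|$.
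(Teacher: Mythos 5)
Your proposal is correct and follows essentially the same route as the paper: write the difference equation for $z=u^1-u^2$, treat the Kirchhoff discrepancy as an $H^{-1}$-source bounded by $C(\|\nabla z\|+|\e_1-\e_2|)$, test with $(-\Delta)^{-1}z_t$ and $\delta z$, handle $f(u^1)-f(u^2)$ as in the $\h_{-1}$-stability estimate \eqref{3.5}, and close with Gronwall. The only (immaterial) difference is bookkeeping: you attach the coefficient discrepancy to $\Delta u^2$ and keep $(1+\e_1\|\nabla u^1\|^2)\Delta z$ on the left, while the paper attaches it to $\Delta u^1$ and moves the whole Kirchhoff contribution to the right-hand side as $I_1+I_2$.
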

\begin{proof}Let $(u^i(t),u_t^i(t))=U^{\e_i}_g(t,\tau)\xi_i$.  It follows from estimate \eqref{3.3} that
\begin{equation}\label{5.4}
  \|(u^i,u_t^i)(t)\|_\h^2+\|u^i_{tt}(t)\|^2_{H^{-2}}+\int^T_\tau\|\nabla u^i_t(s)\|^2ds\leq K, \ \ \forall t\in[\tau, T],
\end{equation}
where $K=K(\tau, T, R_0, \|g_0\|_{L^2_b(\r; L^2)})>0$. Then $z=u^1-u^2$ solves
\begin{equation}
\begin{split}
   & z_{tt}-\Delta z_t-\Delta z+f(u^1)-f(u^2)=\e_1\|\nabla u^1\|^2\Delta u^1-\e_2\|\nabla u^2\|^2\Delta u^2,\ \ t>\tau, \label{5.5}\\
    & (z, z_t)(\tau)=\xi_1-\xi_2.
\end{split}
\end{equation}
Using the multiplier   $(-\Delta)^{-1}z_t+\delta z$ in  Eq. \eqref{5.5}, we obtain
\begin{equation}\label{5.6}
  \frac{\d}{\d t}\Phi(\xi_z)+(1-\delta)\|z_t\|^2+\delta\|\nabla z\|^2+\Big(f(u^1)-f(u^2),(-\Delta)^{-1}z_t+\delta z\Big)=I_1+I_2,
\end{equation}
where $\xi_z=(z,z_t)$,
\begin{equation*}
  \begin{split}
     & \Phi(\xi_z)=\frac{1}{2}\Big(\|z_t\|^2_{H^{-1}}+\|z\|^2+\|\nabla z\|^2\Big)+\delta(z_t,z)\sim \|(z,z_t)\|^2_{\h_{-1}},\\
      & I_1=\Big[(\e_1-\e_2)\|\nabla u^1\|^2+\e_2(u^1+u^2, -\Delta z)\Big]\Big(\Delta u^1,(-\Delta)^{-1}z_t+\delta z \Big),\\
    &    I_2=\e_2\|\nabla u^2\|^2\Big(\Delta z,(-\Delta)^{-1}z_t+\delta z \Big)
  \end{split}
\end{equation*}
for $\delta>0$ suitably small.  Obviously,
\[ |I_1+I_2|\leq C_K\|(z,z_t)\|^2_{\h_{-1}}+|\e_1-\e_2|^2,\]
   where we have used estimate \eqref{5.4}. By Assumption  \eqref{1.3}, the Sobolev embedding  $H^{2-\theta}\hookrightarrow L^{p+1}$ for $0<\theta\ll 1$ and the interpolation, we have
\begin{equation*}
  \begin{split}
     & (f(u^1)-f(u^2),z)\geq -C\|z\|^2+C\int_\lo\big(|u^1|^{p-1}+|u^2|^{p-1}\big)|z|^2dx, \\
      &  |(f(u^1)-f(u^2),(-\Delta)^{-1}z_t)|\leq C\int_\lo(1+|u^1|^{p-1}+|u^2|^{p-1})|z||(-\Delta)^{-1}z_t|dx\\
      &\leq  \frac{\delta C}{2}\int_\lo\big(1+|u^1|^{p-1}+|u^2|^{p-1}\big)|z|^2dx+C(1+\|u^1\|_{p+1}^{p-1}+\|u^2\|_{p+1}^{p-1}\big) \|(-\Delta)^{-1}z_t \|^2_{p+1}\\
         &\leq \frac{\delta C}{2}\int_\lo\big(|u^1|^{p-1}+|u^2|^{p-1}\big)|z|^2dx+\delta\|z_t\|^2+C_{K}(\|z\|^2+\|z_t\|^2_{H^{-1}}).
  \end{split}
\end{equation*}
Inserting above estimates into \eqref{5.6} yields
\begin{equation}\label{5.7}
\frac{\d}{\d t}\Phi(\xi_z(t))\leq |\e_1-\e_2|^2+C_K\Phi(\xi_z(t)),\ \ t>\tau.
\end{equation}
 Applying the Gronwall inequality to \eqref{5.7} over $(\tau,t)$  gives \eqref{5.3}.
\end{proof}

\begin{lemma}\label{53} Under the assumptions of Theorem \ref{51},  the family of processes $\{U^\e_g(t,\tau)\},g\in\ls, \e\in[0,1]$ has a uniformly ($w. r. t.\ g\in\ls$ and $\e\in[0,1]$) absorbing set $\b_0$, which  is bounded in $(H_0^1\cap L^{p+1})\times H_0^1$.
\end{lemma}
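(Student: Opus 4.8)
The plan is to upgrade the $\h$-absorbing set $\b$ from Lemma \ref{42} to a more regular one by invoking the partial regularity estimate \eqref{5.19}, which bounds $\|\nabla u_t\|$ for $t>\tau$. The feature to exploit is that although the right-hand side of \eqref{5.19} blows up as $t\downarrow\tau$ and its constant $K_1$ depends on the horizon $T-\tau$, it depends on the external force only through the translation-invariant quantities $\|g_0\|_{L^2_b(\r;L^2)}$ and $\|\partial_t g_0\|_{L^2_b(\r;L^2)}$, which by \eqref{3.2} dominate the corresponding norms of every $g\in\ls$. Hence, read over a window of fixed unit length, \eqref{5.19} produces a bound that is uniform in $g\in\ls$, in $\e\in[0,1]$ and in the starting time.

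Concretely, let $B\subset\h$ be bounded and $\tau\in\r$. By Lemma \ref{42} there is $T_0=T_0(B,\tau)$ with $\bigcup_{g\in\ls}U^\e_g(s,\tau)B\subset\b$ for all $s\geq T_0$ and all $\e\in[0,1]$. Fix $t\geq T_0+1$, $\xi\in B$ and $g\in\ls$, and set $\eta=U^\e_g(t-1,\tau)\xi$; since $t-1\geq T_0$ we have $\eta\in\b$, i.e.\ $\|\eta\|_\h\leq R_0$. By the process property $U^\e_g(t,\tau)\xi=U^\e_g(t,t-1)\eta$, and I regard the latter as the value at the right endpoint of the solution on the unit window $[t-1,t]$ issued from $\eta\in\b$. (Using translation invariance of $\ls$, via the translation identity \eqref{3.8} together with Lemma \ref{29}, one may equivalently reset the initial time to $0$ and work on $[0,1]$.)

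Applying \eqref{5.19} on this window, with $R=R_0$ and horizon length $1$, gives at the endpoint
\begin{equation*}
\|\nabla u_t(t)\|^2+\|u_{tt}(t)\|^2_{H^{-1}}\leq 2K_1, \qquad K_1=C\big(1,R_0,\|g_0\|_{L^2_b(\r;L^2)},\|\partial_t g_0\|_{L^2_b(\r;L^2)}\big),
\end{equation*}
a constant independent of $g$, of $\e$ and of $t$. Combining this $H_0^1$ bound on $u_t$ with the $\h$ bound $\|U^\e_g(t,\tau)\xi\|_\h\leq R_0$ inherited from $\b$, I obtain $U^\e_g(t,\tau)\xi\in\b_0$ for all $t\geq T_0+1$, where
\begin{equation*}
\b_0=\big\{(u,v)\in(H_0^1\cap L^{p+1})\times H_0^1 : \|u\|^2_{H^1}+\|u\|^2_{p+1}\leq R_0^2,\ \|\nabla v\|^2\leq 2K_1\big\}.
\end{equation*}
Since $T_0+1$ depends only on $(B,\tau)$, the bounded set $\b_0\subset(H_0^1\cap L^{p+1})\times H_0^1$ is the desired uniformly (w.r.t.\ $g\in\ls$ and $\e\in[0,1]$) absorbing set.

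The one genuinely delicate point is the uniformity of $K_1$. The regularizing estimate \eqref{5.19} is only local in time and degenerates at the initial instant, so a naive passage $t\to\infty$ is impossible; the device that rescues it is to always read the estimate over a fixed one-unit window \emph{after} the orbit has already entered $\b$, so that the horizon length stays equal to $1$, while \eqref{3.2} prevents the constant from deteriorating as this window is pushed toward $+\infty$.
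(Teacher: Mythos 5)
Your proof is correct and uses the same key device as the paper: after the orbit has been absorbed into the $\h$-bounded set $\b$, apply the partial regularity estimate \eqref{5.19} over a unit time window, where the constant is uniform in $g\in\ls$, $\e$ and the window's location because it depends on the force only through the translation-invariant norms in \eqref{3.2}. The only (cosmetic) difference is that you take $\b_0$ to be an explicit norm ball in $(H_0^1\cap L^{p+1})\times H_0^1$, whereas the paper defines $\b_0$ as $\bigcup_{\e,g}\bigcup_{t\geq T+1}U^\e_g(t,0)\b$ and then invokes Lemma \ref{29} and the translation identity to verify absorption before applying \eqref{5.19} to $U^\e_g(t,t-1)\b$.
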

\begin{proof}
By Lemma \ref{42}, there exists a $T>0$ such that
\begin{equation*}
  \bigcup_{\e\in[0,1]}\bigcup_{g\in\ls}U^\e_g(t,0)\b\subset \b,\ \ t\geq T.
\end{equation*}
Let
\begin{equation}\label{5.20}
  \b_0=\bigcup_{\e\in[0,1]}\bigcup_{g\in\ls}\bigcup_{t\geq T+1}U^\e_g(t,0)\b (\subset \b).
\end{equation}
Then $\b_0$ is  the desired absorbing set.  Indeed, for any bounded set $D\subset \h$, there exists a  $t_D\geq0$ such that
\[\bigcup_{\e\in[0,1]}\bigcup_{g\in\ls}U^\e_g(t,0)D\subset \b\ \ \hbox{as}\ \ t\geq t_D.\]
When $t\geq t_D+T+1+\tau$, by Lemma \ref{29}, there exist at least one $g'\in \ls$ such that
\begin{equation*}
  \begin{split}
    U^\e_g(t,\tau)D & =U^\e_{g'}(t-\tau,0)D= U^\e_{g'}(t-\tau,t_D)U^\e_{g'}(t_D,0)D\\
      &  \subset U^\e_{g'}(t-\tau,t_D)\b= U^\e_{T(t_D)g'}(t-\tau-t_D,0)\b\subset \b_0
  \end{split}
\end{equation*}
for any $\e\in[0,1], g\in\ls, \tau\in\r$,  where we have used   translation identity \eqref{3.8}.
Due to
\begin{equation*}
  \b_0=\bigcup_{\e\in[0,1]}\bigcup_{g\in\ls}\bigcup_{t\geq T+1}U^\e_g(t,t-1)U^\e_g(t-1,0)\b\subset\bigcup_{\e\in[0,1]}\bigcup_{g\in\ls}\bigcup_{t\in\r}U^\e_g(t,t-1)\b,
\end{equation*}
we infer from estimate \eqref{5.19}  that $\b_0$ is bounded in $(H_0^1\cap L^{p+1})\times H_0^1$.
\end{proof}

\begin{proof}[Proof of Theorem \ref{51}]   If the formula  \eqref{5.1} does not hold.  There must exist $\delta>0, \e_0\in[0,1], \{\e_n\}\subset[0,1]$ with $\e_n\rightarrow \e_0$, and $\xi_n\in \mathcal{A}^{\e_n}_\ls$ such that
\begin{equation}\label{5.10}
 \mathrm{dist}_{\h_{-1}}\{\xi_n, \mathcal{A}_{\ls}^{\e_0}\}>\delta, \ \ \forall n.
\end{equation}
Due to $\h\hookrightarrow \h_{-1}$, we have
\begin{equation*}
  \sup_{g\in\ls}\mathrm{dist}_{\h_{-1}}\{U^{\e_0}_g(t,0)\b, \mathcal{A}_{\ls}^{\e_0} \}\leq C\sup_{g\in\ls}\mathrm{dist}_{\h}\{U^{\e_0}_g(t,0)\b, \mathcal{A}_{\ls}^{\e_0} \}\rightarrow 0,
\end{equation*}
which implies that there exists a $T>0$ such that when $t\geq T$, 
\begin{equation}\label{5.11}
\sup_{g\in\ls}\mathrm{dist}_{\h_{-1}}\{U^{\e_0}_g(t,0)\b, \mathcal{A}_{\ls}^{\e_0} \}\leq \frac{\delta}{3}\ \ \hbox{and }\ \  \bigcup_{\e\in[0,1]}\bigcup_{g\in\ls}U^\e_g(t,0)\b\subset \b.
\end{equation} 
Due to  $\xi_n\in \mathcal{A}_{\ls}^{\e_n}=\omega^{\e_n}_{0,\ls}(\b)$, there exist $g_n\in\ls$, $\eta_n\in\b$ and $t_n\geq 2T$ such that
\begin{equation}\label{5.13}
\|U^{\e_n}_{g_n}(t_n, 0)\eta_n-\xi_n\|_{\h_{-1}}\leq C \|U^{\e_n}_{g_n}(t_n, 0)\eta_n-\xi_n\|_{\h}\leq \frac{\delta}{3}, \ \ \forall n.
\end{equation}
Since
\begin{equation*}
  \begin{split}
    U^{\e_n}_{g_n}(t_n, 0)\eta_n & =U^{\e_n}_{g_n}(t_n, t_n-T)U^{\e_n}_{g_n}(t_n-T, 0)\eta_n \\
      &  =U^{\e_n}_{g_n}(t_n, t_n-T)z_n=U^{\e_n}_{T(t_n-T)g_n}(T, 0)z_n, \ \ \forall n,
  \end{split}
\end{equation*}
where $z_n=U^{\e_n}_{g_n}(t_n-T, 0)\eta_n\in \b$ for $t_n-T\geq T$,  we infer  from Lemma \ref{52}  that there exists  a $N>0$ such that
\begin{equation}\label{5.14}
  \|U^{\e_n}_{T(t_n-T)g_n}(T, 0)z_n-U^{\e_0}_{T(t_n-T)g_n}(T, 0)z_n \|_{\h_{-1}}\leq C_K |\e_n-\e_0|\leq \frac{\delta}{3}\ \ \hbox{as}\ \  n\geq N
\end{equation}
for   $\e_n\rightarrow \e_0$. Therefore, it follows  from estimates \eqref{5.11}-\eqref{5.14} that
\begin{equation*}
  \begin{split}&
    \mathrm{dist}_{\h_{-1}}\{\xi_n, \mathcal{A}_{\ls}^{\e_0}\}\\
     & \leq \|\xi_n-U^{\e_n}_{g_n}(t_n, 0)\eta_n \|_{\h_{-1}}  + \|U^{\e_n}_{T(t_n-T)g_n}(T, 0)z_n-U^{\e_0}_{T(t_n-T)g_n}(T, 0)z_n \|_{\h_{-1}}\\
     &\ \ \ \ +\mathrm{dist}_{\h_{-1}}\{U^{\e_0}_{T(t_n-T)g_n}(T, 0)z_n, \mathcal{A}_{\ls}^{\e_0}\}\leq \delta, \ \ \forall n\geq N,
  \end{split}
\end{equation*}
which violates \eqref{5.10}. Therefore,   formula  \eqref{5.1} holds.
\medskip

Now, we give the proof of formula \eqref{5.2} by contradiction. If formula \eqref{5.2} does not hold, there must exist  $s_0\in \r, g\in\ls, \delta>0, \e_0\in[0,1]$, sequences $\{\e_n\}\subset [0,1]$ with $\e_n\rightarrow \e_0$ and $\xi_n\in \K^{\e_n}_g(s_0)$ such that
\begin{equation}\label{5.15}
  \mathrm{dist}_{\h_{-1}}\{\xi_n, \K^{\e_0}_g(s_0)\}>\delta, \ \ \forall n.
\end{equation}
On the other hand,  the process $U^{\e_n}_g(t,\tau)$ has   a bounded full trajectory $\gamma_n=\{\xi_u^n(t)|t\in\r\}$ for each $n$  such that
\begin{equation}\label{5.16}
  \xi_n=\xi_u^n(s_0)\ \ \hbox{and}\ \ U^{\e_n}_g(t,\tau)\xi_u^n(\tau)=\xi_u^n(t),\ \ \forall t\geq \tau, \tau\in\r.
\end{equation}
Formula \eqref{4.6} shows that  $\xi_u^n(s)\in \K^{\e_n}_g(s)\subset \mathcal{A}_\ls^{\e_n}, \ \forall s\in\r$. By formula \eqref{5.1} and the compactness of $\mathcal{A}_\ls^{\e_0}$ in $\h_{-1}$,   there must  exist  a $\xi_u(s)\in \mathcal{A}_\ls^{\e_0}$ such that (subsequence if necessary),
\begin{equation}\label{5.17}
  \xi^n_u(s)\rightarrow \xi_u(s)\ \ \hbox{in}\ \ \h_{-1}, \ \ \forall s\in\r.
\end{equation}
Then we infer form Lemma \ref{52} that
\begin{align}\label{5.18}&
 \|U^{\e_n}_g(t,\tau)\xi_u^n(\tau)-U^{\e_0}_g(t,\tau)\xi_u(\tau)\|_{\h_{-1}}\nonumber\\
 &\leq C_K(\|\xi_u^n(\tau)-\xi_u(\tau)\|_{\h_{-1}}+|\e_n-\e_0|)
 \rightarrow 0 \ \ \hbox{as}\ \ n\rightarrow\infty, \ \forall t\geq \tau, \tau\in\r.
\end{align}
By the uniqueness of the limit,
\begin{equation*}
  \xi_u(t)=U^{\e_0}_g(t,\tau)\xi_u(\tau), \ \ \forall t\geq \tau, \tau\in\r,
\end{equation*}
which means   $\gamma=\{\xi_u(t)|t\in\r\}\in \K^{\e_0}_g$ and $\xi_u(s_0)\in \K^{\e_0}_g(s_0)$. Hence,
\begin{equation*}
  \mathrm{dist}_{\h_{-1}}\{\xi_n, \K^{\e_0}_g(s_0)\}\leq \mathrm{dist}_{\h_{-1}}\{\xi_u^n(s_0), \xi_u(s_0)\}\rightarrow 0, \ \ n\rightarrow \infty,
\end{equation*}
which violates \eqref{5.15}. Therefore, formula \eqref{5.2} holds.
\end{proof}

We consider the bounded uniformly absorbing set $\b_0$ as a topology space equipped with the partially strong topology as shown in \eqref{1.5}. Since $\b_0$ is bounded in $(H_0^1\cap L^{p+1})\times H_0^1$, this topology can be defined by the following metric $\rho$:
\begin{equation}\label{5.21}
  \rho(\xi_u,\xi_v)=\|\nabla (u_0-v_0)\|+\|u_1-v_1\|+\sum_{n=1}^{\infty}2^{-n}\frac{|(u_0-v_0, g_n)|}{1+|(u_0-v_0, g_n)|},
\end{equation}
where $\xi_u=(u_0,u_1), \xi_v=(v_0,v_1)\in\b_0$, $\{g_n\}\subset H^{-1}\cap L^{1+\frac{1}{p}}$ such that $\|g_n\|_{H^{-1}}=1$ and $span\{g_n| n\in\n\}$ is dense in $L^{1+\frac{1}{p}}$ (cf. \cite{Chueshov}).

\begin{corollary}\label{54} Let Assumption  \ref{11} be valid, with $g\in\ls$. Then
\begin{enumerate}[$(i)$]
  \item  the compact uniform attractors $\mathcal{A}_\ls^\e$ as shown in Theorem \ref{43} is upper semicontinuous at the point $\e_0\in [0,1]$ in the sense of partially strong topology, i.e.,
\begin{equation*}
 \lim_{\e\rightarrow \e_0}\mathrm{dist}_{\rho}\{\mathcal{A}_\ls^\e,\mathcal{A}_\ls^{\e_0}\}=0,
\end{equation*}
where
\begin{equation*}
  \mathrm{dist}_{\rho}\{A,B\}=\sup_{x\in A}\inf_{y\in B}\rho(x,y), \ \ A, B\subset \b_0;
\end{equation*}
    \item for any fixed $g\in \ls$ and $\e\in\ls$, the family of all kernel sections $\mathcal{A}^\e_g=\{\K_g^\e(t)\}_{t\in \mathbb{R}}$ is the pullback attractor of the process  $\{U^\e_g(t,\tau)\}$, and it is upper semicontinuous
    at the point $\e_0\in [0,1]$ in the sense of partially strong topology, i.e.,
       \begin{equation*}
 \lim_{\e\rightarrow \e_0}\mathrm{dist}_{\rho}\{\K_g^\e(s), \K_g^{\e_0}(s)\}=0,\ \ \forall   s\in \r.
\end{equation*}
\end{enumerate}
\end{corollary}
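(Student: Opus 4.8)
The plan is to upgrade the $\h_{-1}$-upper semicontinuity already proved in Theorem \ref{51} to the partially strong metric $\rho$ of \eqref{5.21}, the extra ingredient being the higher regularity of the absorbing set from Lemma \ref{53}. The structural fact I would record first is that, by formula \eqref{4.6} and Lemma \ref{53}, each attractor $\mathcal{A}_\ls^\e$ and each kernel section $\K_g^\e(s)$ is contained in $\b_0$, which is bounded in $(H_0^1\cap L^{p+1})\times H_0^1$ \emph{uniformly} in $\e\in[0,1]$ and $g\in\ls$. Thus any sequence selected from these sets is bounded in $(H_0^1\cap L^{p+1})\times H_0^1$, and since $\rho$ only adds to the $\h_{-1}$-type information a strong $L^2$-control of the velocity and a weak-$L^{p+1}$-control of the displacement, it suffices to produce these two upgrades along such bounded sequences.

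For part (i), I would argue by contradiction in the style of the proof of Theorem \ref{51}. If $\mathrm{dist}_\rho\{\mathcal{A}_\ls^\e,\mathcal{A}_\ls^{\e_0}\}\not\to0$, there are $\delta>0$, $\e_n\to\e_0$ and $\xi_n=(u_0^n,u_1^n)\in\mathcal{A}_\ls^{\e_n}$ with $\mathrm{dist}_\rho\{\xi_n,\mathcal{A}_\ls^{\e_0}\}>\delta$. Because $\h\hookrightarrow\h_{-1}$, the set $\mathcal{A}_\ls^{\e_0}$ is compact in $\h_{-1}$, so by \eqref{5.1} I may choose $\zeta_n=(v_0^n,v_1^n)\in\mathcal{A}_\ls^{\e_0}$ attaining the $\h_{-1}$-distance, whence $\|\nabla(u_0^n-v_0^n)\|\to0$ and $\|u_1^n-v_1^n\|_{H^{-1}}\to0$. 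The first term of $\rho(\xi_n,\zeta_n)$ is exactly $\|\nabla(u_0^n-v_0^n)\|\to0$; the third (weak-$L^{p+1}$) term is controlled by the same quantity, since every test function $g_k$ in \eqref{5.21} satisfies $\|g_k\|_{H^{-1}}=1$, so that $|(u_0^n-v_0^n,g_k)|\le\|\nabla(u_0^n-v_0^n)\|$ uniformly in $k$ and hence $\sum_{k}2^{-k}\tfrac{|(u_0^n-v_0^n,g_k)|}{1+|(u_0^n-v_0^n,g_k)|}\le\|\nabla(u_0^n-v_0^n)\|\to0$.

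The genuinely new point is the second term, namely upgrading the $H^{-1}$-convergence of the velocity to $L^2$-convergence. Here I would invoke that $\{u_1^n\}$ and $\{v_1^n\}$ both lie in the $H_0^1$-bounded set $\b_0$, so $w_n:=u_1^n-v_1^n$ is bounded in $H_0^1$ while $\|w_n\|_{H^{-1}}\to0$; the interpolation inequality $\|w_n\|^2\le\|w_n\|_{H_0^1}\|w_n\|_{H^{-1}}$ then forces $\|u_1^n-v_1^n\|\to0$. Collecting the three estimates gives $\rho(\xi_n,\zeta_n)\to0$, hence $\mathrm{dist}_\rho\{\xi_n,\mathcal{A}_\ls^{\e_0}\}\to0$, contradicting the choice of $\xi_n$. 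For part (ii), the identification of $\{\K_g^\e(t)\}_{t\in\r}$ as the pullback attractor of $\{U_g^\e(t,\tau)\}$ is the standard correspondence between uniform-attractor kernel sections and pullback attractors (cf.\ \cite{C-V}), and the $\rho$-upper semicontinuity of $\K_g^\e(s)$ follows verbatim from the above three-step upgrade, using \eqref{5.2} in place of \eqref{5.1} and a sequence $\xi_n\in\K_g^{\e_n}(s)$.

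The main obstacle is precisely this $H^{-1}\!\to\!L^2$ upgrade of the velocity component: it is the only step not already contained in Theorem \ref{51}, and it is exactly where the partial regularity \eqref{5.19}, transmitted through Lemma \ref{53}, becomes indispensable. Without the uniform $H_0^1$-bound on $\b_0$ no interpolation is available and the $L^2$-term of $\rho$ could not be controlled; everything else reduces to the already-proven $\h_{-1}$-semicontinuity together with the elementary observation that the weak-$L^{p+1}$ part of $\rho$ is dominated by the strong $H_0^1$-norm against the $H^{-1}$-normalized test functions.
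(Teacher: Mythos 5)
Your proposal is correct and follows essentially the same route as the paper: both arguments reduce the $\rho$-semicontinuity to the $\h_{-1}$-semicontinuity of Theorem \ref{51} by (a) dominating the weak-$L^{p+1}$ part of $\rho$ by $\|\nabla(u_0-v_0)\|$ via the normalization $\|g_n\|_{H^{-1}}=1$, and (b) upgrading $H^{-1}$- to $L^2$-convergence of the velocities through the interpolation $\|w\|^2\le\|w\|_{H_0^1}\|w\|_{H^{-1}}$ together with the $H_0^1$-boundedness of $\b_0$ from Lemma \ref{53}. The paper packages these steps as the direct inequality $\mathrm{dist}_\rho\{A,B\}\le C[\mathrm{dist}_{\h_{-1}}\{A,B\}]^{1/2}$ on $\b_0$ rather than arguing by contradiction along sequences, but this is only a difference of presentation, not of substance.
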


\begin{proof}  Since $\mathcal{A}_\ls^\e$ is the compact uniform attractor of the family of processes $\{U^\e_g(t,\tau\}, g\in \ls$ and \eqref{4.6} holds,  by the standard theory on the uniform attractor (cf. Chapter IV in \cite{C-V}), for any fixed $g\in \ls$ and $\e\in\ls$,
the family of all kernel sections $\mathcal{A}^\e_g=\{\K_g^\e(t)\}_{t\in \mathbb{R}}$ is just a pullback attractor of the process $\{U^\e_g(t,\tau\}$.

Due to
\begin{equation*}
  \frac{|(u_0-v_0, g_n)|}{1+|(u_0-v_0, g_n)|}\leq \frac{\|\nabla(u_0-v_0)\|\| g_n\|_{H^{-1}}}{1+|(u_0-v_0, g_n)|}\leq \|\nabla(u_0-v_0)\|, \ \ \forall n,
\end{equation*}
we see from \eqref{5.21} that
\begin{equation}\label{5.22}
\rho(\xi_u,\xi_v)\leq 2\|\xi_u-\xi_v\|_{H^1_0\times L^2}.
\end{equation}
 For any $\xi_u=(u_0,u_1), \xi_v=(v_0,v_1)\in\b_0$, by the interpolation,
\begin{equation}\label{5.23}
  \begin{split}
  \|\xi_u-\xi_v\|_{H^1_0\times L^2}   & \leq \|\nabla(u_0-v_0)\|+\|\nabla(u_1-v_1)\|^{\frac{1}{2}}\|u_1-v_1\|_{H^{-1}}^{\frac{1}{2}}\\
      & \leq C\|\xi_u-\xi_v\|_{\h_{-1}}^{\frac{1}{2}}.
  \end{split}
\end{equation}
Taking account of  $\mathcal{A}^\e_\ls\subset \b_0$ for all $\e\in[0,1]$, we infer from \eqref{5.22}-\eqref{5.23} and Theorem \ref{51} that
\begin{align*}&
\mathrm{dist}_{\rho}\{\mathcal{A}_\ls^\e,\mathcal{A}_\ls^{\e_0}\}\leq C[\mathrm{dist}_{\h_{-1}}\{\mathcal{A}_\ls^\e,\mathcal{A}_\ls^{\e_0}\}]^{\frac{1}{2}}\rightarrow 0,\\
&\mathrm{dist}_{\rho}\{\K_g^\e(s), \K_g^{\e_0}(s)\}\leq C[\mathrm{dist}_{\h_{-1}}\{\K_g^\e(s), \K_g^{\e_0}(s)\}]^\frac{1}{2}\rightarrow 0\ \ \hbox{as }\ \ \epsilon\rightarrow \e_0,
\ \ \forall   s\in \r.
\end{align*}
\end{proof}

\begin {thebibliography}{90} {\footnotesize
\bibitem{Bae}
J. J.  Bae, M. Nakao, Existence problem of global solutions of the Kirchhoff type wave equations with a localized
weakly nonlinear dissipation in exterior domains, Discrete Contin. Dyn. Syst.  11 (2004) 731-743.

\bibitem{Ball}
J. M. Ball, Global attractors for damped semilinear wave equations, Discrete Contin. Dyn. Syst.  10 (2004), 31-52.

\bibitem{Cavalcanti}
M. M. Cavalcanti, V. N. D. Cavalcanti, J. S. P. Filho, J. A. Soriano, Existence and exponential decay for a Kirchhoff-
Carrier model with viscosity, J. Math. Anal. Appl. 226 (1998) 40-60.

\bibitem{C-V}
V. V. Chepyzhov, M. I. Vishik, Attractors for  equations of mathematical physics, Amer. Math. Soc.,  Colloquium Publications, Vol. 49, Providence, RI, 2002.

\bibitem{Chueshov1}
 I. Chueshov, I. Lasiecka, Long-Time Behavior of Second Order Evolution Equations with Nonlinear Damping, Memoirs of
AMS 912, Amer. Math. Soc., Providence, 2008.

\bibitem{Chueshov}
I. Chueshov, Long-time dynamics of Kirchhoff wave models with strong
nonlinear damping, J. Differential Equations, 252 (2012) 1229-1262.

\bibitem{DYL}
P. Y. Ding,  Z. J. Yang, Y. N. Li,  Global attractor of the  Kirchhoff wave models with  strong nonlinear damping, Appl. Math. Lett. 76 (2018) 40-45.

\bibitem{zhou}
X. Fan, S. Zhou, Kernel sections for non-autonomous strongly damped wave equations of non-degenerate Kirchhoff-type, Appl. Math. Comput. 158 (2004) 253-266.

\bibitem{MM}
M. M. Freitas, P. Kalita, J. A. Langa, Continuity of non-autonomous attractors for hyperbolic perturbation of parabolic equations, J. Differential Equations, 264 (2018) 1886-1945.

\bibitem{Kalantarov}
 V. Kalantarov, S. Zelik, Finite-dimensional attractors for the quasi-linear strongly-damped wave equation, J. Differential
Equations, 247 (2009) 1120-1155.

\bibitem{Kirchhoff}
 G. Kirchhoff,
 Vorlesungen \"{u}ber Mechanik,
    Lectures on Mechanics, Teubner, Stuttgart, 1883.

\bibitem{Lu1} S. S. Lu, H. Q. Wu,  C. K. Zhong,
 Attractors for non-autonomous $2D$ Navier-Stokes equations with normal external forces,
Discrete Contin. Dyn. Syst.  13 (2005) 701-719.

\bibitem{Lu2}
S. S. Lu, Attractors for non-autonomous $2D$ Navier-Stokes equations with less regular normal forces, J. Differential Equations, 230 (2006) 196-212.

\bibitem{Lu3}
S. S. Lu, Attractors for non-autonomous reaction-diffusion systems with symbols without strong translation compactness, Asymptot. Anal.  54 (2007) 197-210.

\bibitem{M-Z}
H. L. Ma, C. K. Zhong, Attractors for the Kirchhoff equations with strong nonlinear damping, Appl. Math. Lett. 74 (2017) 127-133.

\bibitem{Ma2}
S. Ma, C. K. Zhong, H. Li, Attractors for non-autonomous wave equations with a new class of external forces,
J. Math. Anal. Appl.  337 (2008) 808-820.

\bibitem{Ma3}
S. Ma, C. K. Zhong, The attractors for weakly damped non-autonomous hyperbolic equations with a new class of external forces,  Discrete Contin. Dyn. Syst.  18 (2007) 53-70.

\bibitem{Matsuyama}
T. Matsuyama, R. lkehata, On global solution and energy decay for the wave equation of Kirchhoff-type with nonlinear damping term, J. Math. Anal. Appl.   204 (1996) 729-753.

\bibitem{Moise}
I. Moise, R. Rosa, X. Wang, Attractors for noncompact non-autonomous systems via energy equations, Discrete Contin. Dyn. Syst.   10 (2004) 473-496.

\bibitem{Nakao1}
M. Nakao, An attractor for a nonlinear dissipative wave equation of Kirchhoff type, J. Math. Anal. Appl.,  353  (2009) 652-659.

\bibitem{Nakao2}
M. Nakao, Z. J. Yang, Global attractors for some quasi-linear wave equations with a strong dissipation, Adv. Math. Sci. Appl.  17 (2007) 89-105.

\bibitem{Nishihara}
K.  Nishihara,  Decay properties of solutions of some quasilinear hyperbolic equations with strong
damping, Nonlinear Anal.  21  (1993) 17-21.

\bibitem{Ono}
K. Ono, Global existence, decay, and blowup of solutions for
some mildly degenerate nonlinear Kirchhoff strings,  J. Differential Equations, 137 (1997) 273-301.

\bibitem{Ono1}
K. Ono, On global existence, asymptotic stability
and blowing up of solutions for some degenerate
non-linear wave equations of Kirchhoff type
with a strong dissipation, Math. Methods  Appl. Sci.  20 (1997)  151-177.

\bibitem{Simon}
J. Simon,   Compact sets in the space $L^p(0,T;B)$,  Ann. Mat. Pura Appl.   146 (1986) 65-96.

\bibitem{Sun1}
C. Y. Sun, D. M. Cao, J. Q. Duan, Non-autonomous dynamics of wave equations with nonlinear damping and critical
nonlinearity, Nonlinearity, 19 (2006) 2645-2665.

\bibitem{Sun}
C. Y. Sun, D. M. Cao, J. Q. Duan, Uniform attractors for non-autonomous wave equations with nonlinear damping, SIAM J. Appl. Dyn. Syst.  6 (2007) 293-318.

\bibitem{BX}
B. X. Wang, Uniform attractors of non-autonomous discrete reaction-diffusion systems in weighted spaces, Int. J. Bifurcation Chaos, 18 (2008) 659-716.

\bibitem{zhong}
Y. H.  Wang, C. K. Zhong, Upper semicontinuity of pullback attractors for non-autonomous Kirchhoff wave  models, Discrete Contin. Dyn. Syst.  7 (2013) 3189-3209.

\bibitem{Y1}
  Z. J. Yang, Long-time behavior of the Kirchhoff type equation with strong damping in $\mathbb{R}^N$, J. Differential Equations, 242
(2007) 269-286.

\bibitem{Y-W}
Z. J.  Yang, Y. Q. Wang, Global attractor for the Kirchhoff type equation with
a strong dissipation, J. Differential Equations, 249 (2010) 3258-3278.

\bibitem{Y-D}
Z. J.  Yang, P. Y. Ding, Longtime dynamics of the Kirchhoff equation with strong
damping and critical nonlinearity on $\mathbb{R}^N$, J. Math. Anal. Appl. 434 (2016) 1826-1851.

\bibitem{Zelik2}
S. Zelik, Asymptotic regularity of solutions of a non-autonomous damped wave equation with a critical growth exponent, Comm. Pure Appl. Anal.  4 (2004) 921-934.

\bibitem{Zelik}
S. Zelik, Strong uniform attractors for non-autonomous dissipative PDEs with non translation-compact external forces,  Discrete Contin. Dyn. Syst.: B  20 (2015) 781-810.

}

\end{thebibliography}

\end{document}